\declaretheorem[numberwithin=section]{theorem}
\declaretheorem[numberlike=theorem]{lemma, corollary,proposition}
\declaretheorem[numberlike=theorem,style=definition]{definition}
\declaretheorem[numbered=no,style=remark]{remark}
\declaretheoremstyle[
notefont=\bfseries, notebraces={(}{)}
]{conjecture}
\newenvironment{examplex}
{\pushQED{\qed}\example}
{\popQED\endexample}
\newcommand{\bfx}{\mathbf{x}}
\newcommand{\bfa}{\mathbf{a}}
\newcommand{\bfu}{\mathbf{u}}
\newcommand{\bfv}{\mathbf{v}}
\newcommand{\calI}{\mathcal{I}}
\newcommand{\numberset}{\mathbb}
\newcommand{\C}{\mathbb{C}}
\newcommand{\Z}{\numberset{Z}}
\newcommand{\R}{\numberset{R}}
\newcommand{\PP}{\mathbb{P}}
\definecolor{cof}{RGB}{219,144,71}
\definecolor{pur}{RGB}{186,146,162}
\definecolor{greeo}{RGB}{91,173,69}
\definecolor{greet}{RGB}{52,111,72}
\DeclareMathOperator*{\argmax}{argmax}
\DeclareMathOperator{\Gr}{Gr}
\title{The SagbiHomotopy.jl package for solving polynomial systems}
\author{Barbara Betti and Viktoriia Borovik}
\date{}
\begin{document}

\maketitle

\begin{abstract}
 We present the \texttt{Julia}  package \texttt{SagbiHomotopy.jl} for solving systems of polynomial equations using numerical homotopy continuation. The package introduces an optimal choice of a~start system based on SAGBI homotopies. For square horizontally parameterized systems, where each equation is a linear combination of a given set of polynomials, SAGBI homotopies significantly reduce the number of solution paths to track compared to polyhedral homotopies currently used by default in most software for numerical homotopy continuation. We illustrate our framework with a variety of examples, including problems arising in chemistry and physics.
\end{abstract}

\section{Introduction}

Solving polynomial systems is a difficult problem that constantly forces the development of new methods. The application of computational numerical techniques is increasing, and one of the main ones is \emph{numerical homotopy continuation}. This method approximates the sets of solutions to polynomial systems defining zero-dimensional varieties. It is based on deforming a system of polynomial equations into another polynomial system with known or easily computable solutions. Once this system is solved, the isolated complex solutions are deformed into solutions to the original system by the same deformation. 

This applies when we have a system $H_{\mathbf{t}}(\mathbf{x})=0$ in variables $\mathbf{x}=(x_1,\dots,x_n)  \in \C^n$ parameterized by polynomials in $\mathbf{t}=(t_1,\dots,t_m) \in \C^m$. Assume that we can compute the solutions for special values~$\mathbf{t}_0$ of the parameters, namely the \emph{start parameters}.  Let $\mathbf{t}_1$ be the \emph{target parameters} defining the system $H_{\mathbf{t}_1}(\mathbf{x})=0$ that we want to solve. We construct a homotopy $H(\mathbf{x},\mathbf{t})$,
\[ H(\mathbf{x},\mathbf{t}_0)=H_{\mathbf{t}_0}(\mathbf{x}), \quad  H(\mathbf{x},\mathbf{t}_1)=H_{\mathbf{t}_1}(\mathbf{x}),\]
such that the domain of the polynomials in $\mathbf{t}$ is path connected. This requirement is fundamental in order to create a path between the start and the target system. If the system $H_{\mathbf{t}}(\mathbf{x})=0$ has the same number of isolated zeros for every $\mathbf{t} \neq \mathbf{t}_0$ and the start system has at least that many isolated zeros, we can track each solution along the continuous solution path using a predictor-corrector methods. The \emph{parameter continuation theorem} by Morgan and Sommese \cite{MS1989} (see also \cite{mePaul}) asserts that solution paths exist. We obtain all isolated solutions for $H_{\mathbf{t}_1}(\mathbf{x})=0$. 
When the start system has exactly the same number of isolated complex zeros as the target system, we say that a homotopy is \emph{optimal}, since we will need to track the minimal number of paths to still get \emph{all}  the solutions of the target system.
This technique has been implemented in several software packages, such as \texttt{Bertini}~\cite{bates2013bertini}, \texttt{PHCpack}~\cite{verschelde1999phcpack} and \texttt{HomotopyContinuation.jl}~\cite{HomotopyContinuation.jl}. Our focus is on the latter, effectively implemented by Paul Breiding and Sascha Timme, since we work in the open-source programming language \texttt{Julia}~\cite{bezanson2012julia}. 

However, the construction of an optimal start system is a difficult problem that has not yet been solved in the general case. Instead, we consider polynomial systems as part of a family of polynomial systems for which an optimal homotopy can be constructed for almost all members of this family. Some well-known good start systems that bound the number of complex solutions of the target system from above are given by \emph{total degree}, \emph{multinomial}, and \emph{polyhedral} homotopies~\cite{polyhedralhomotopy}. 

In this paper we consider the family of square polynomial systems defined on a multiprojective variety $X$ admitting a toric degeneration. %, or, equivalently, \todo{probably better to rephrase, it is not equivalent} 
In practice, we assume that $X$ is parameterized by a~\emph{SAGBI basis} which is a \emph{Khovanskii basis} for certain valuations, see~\cite{KM}. This assumption is satisfied by many varieties often found in applications, such as Grassmannians. We implement the algorithm for constructing a start system in the registered \texttt{Julia} package \texttt{SagbiHomotopy.jl}. The source code is available at the online GitHub repository \href{https://github.com/Barbarabetti/SagbiHomotopy.jl}{SagbiHomotopy} and the examples discussed in the paper can be found and reproduced at the MathRepo (\cite{mathrepo}) page:
\begin{equation}\label{eq: mathrepo}
\hbox{\url{https://mathrepo.mis.mpg.de/SagbiHomotopy}}
\end{equation}
In \cite{NumericalHomotopies} the authors present a numerical homotopy continuation algorithm for solving systems of
polynomial equations on a variety $X$ in the presence of a finite {Khovanskii} basis. They consider varieties embedded in a single projective space. %, that is, their family of polynomial systems is slightly more specific. 
Theoretically, we use the same algorithm as they do, but extend it to the case of a multiprojective variety $X$ and develop an alternative implementation that takes advantage of having a finite SAGBI basis as a parameterization of $X$. Moreover, we avoid computing the defining ideal of the variety $X$ and instead use the degeneration of the coordinate ring $\C[X]$ into a monomial algebra. In order to distinguish our procedures, we consistently use the term ``SAGBI basis'' instead of the more general ``Khovanskii basis''.
%Indeed to degenerate $\C[X]$ into a monomial algebra we degenerate the polynomial in the SAGBI bases into their initial forms and consider the monomial algebra arising in this way. This is possible without explicitly give a weight vector for which we have a SAGBI basesm because it can be detect inside the algorithm. 

The algorithm from~\cite{NumericalHomotopies} has not been implemented in the general setting because of two main challenges. First, the polynomials providing local coordinates on the variety $X$ cannot, in general, be derived directly from the given polynomial system. Second, a finite Khovanskii basis for a given variety 
$X$ may not exist and, when it does, it depends on the choice of a valuation on $\C[X]$. The first issue remains the major obstacle, and currently the user must provide the system in a specific form, as described in \Cref{sec:4}. However, the second issue can be addressed in the SAGBI bases setting. Namely, given a finite set of polynomials, we can determine whether there exists a valuation – in this case a term order – for which they form a SAGBI basis, or we can reveal that no such term order exists. This is implemented using the \emph{SAGBI detection} algorithm \cite{sagbidetection}. %It is also possible to compute a finite SAGBI basis for $\C[X]$ when it exists, applying similar techniques as in \cite{sagbiMacaulay2}.

%Moreover, the authors of \cite{NumericalHomotopies} use a two-step homotopy, that is at first step they construct an analog of our SAGBI homotopy to a \emph{sparse system} and at the second step they construct  a polyhedral homotopy into a binomial system. We can combine these two homotopies to degenerate our original system into a binomial one directly. This is described in Section 3. \todo{in the end we are not implementing this, remove paragraph}

Our main contribution is the implementation of the SAGBI homotopy algorithm in the general setting. Our algorithm is implemented in \texttt{Julia} (\texttt{v1.10.5}) using \texttt{Oscar.jl} (\texttt{v1.2.2}) and \texttt{HomotopyContinuation.jl} (\texttt{v2.13.0}). The paper is organized as follows. In \Cref{sec:2} we recall the basics of SAGBI bases and toric degenerations. In \Cref{sec:3} we present our numerical homotopy algorithm, which uses SAGBI bases for a homogeneous coordinate ring of a variety. In \Cref{sec:4} we introduce the package \texttt{SagbiHomotopy.jl} and demonstrate the use of its main functions with examples. Finally, in \Cref{sec:5} we apply our algorithm to parameterized families of polynomial systems arising in the sciences. %The code is available at \cite{}. \todo{add link to the package}

%This feature will be available as a part of  the package \texttt{HomotopyContinuation.jl} and its successful integration  is still in progress. We also intend to implement homotopies to a start system coming from \emph{semi-toric degenerations}, i.e., degenerations to a reduced union of toric varieties.
%\begin{remark}
 % Throughout the whole article, we work over the complex numbers.  In contrast to the paper \cite{NumericalHomotopies}, we do not use more general language of Khovanskii bases, instead we use SAGBI bases throughout to emphasize that we are always working with finitely generated subalgebras of a~polynomial ring.
%\end{remark}
% \vika{TODO: \begin{itemize}
%     \item refer to \cite{NumericalHomotopies}, theoretically we have the same algorithm, but different implementation. Our advantages: we degenerate generators of the subalgebra, no need in finding the ideal, we can combine two homotopies into one (sagbi and polyhedral), we can detect a weight for which we have a sagbi.
%     \item in the end of the introduction say that the work is in progress.
% \end{itemize}}

%%%%%%%%%%%%%%%
%%%%%%%%%%%%%%%
%%%%%%%%%%%%%%%

\section{SAGBI bases and toric degenerations}\label{sec:2}

Our main tool for constructing a SAGBI homotopy is a \emph{Gröbner toric degeneration} of a projective or a multiprojective unirational variety. Algebraically, for a given unirational variety, this degeneration is described by a SAGBI basis for its homogeneous (respectively multihomogeneous) coordinate ring.

\bigskip

In this section, $X$ is either a projective variety in $\PP^k$ or a multiprojective variety in
\[\mathbf{P}^{k_1, \dots, k_m} := \PP^{k_1} \times \dots \times \PP^{k_m}.\]
\begin{definition}
%Let $X$ be a variety. 
A \emph{toric degeneration} of $X$ is a family of varieties~$\mathcal{X}$ together with a \emph{flat morphism} ${\pi\colon \mathcal{X} \to  \C^1}$ such that  each fibre over $t \in \C\setminus \{0\}$ is isomorphic to $X$ and the fibre over $t=0$ is a
toric variety $X_0$.  These varieties are respectively called \emph{general fibre} and \emph{special fibre}.
\end{definition}    
The advantage of the flat family is that we can determine many properties of a general fibre $X$ by studying the special fibre $X_0$. Indeed, varieties of the flat family $\mathcal{X}$ share common properties such as dimension, degree, Hilbert function, Cohen-Macaulayness, normality, etc. For more details we refer to \cite{grothendieck1966eléments, bruns2022determinants}. 
In particular, we will heavily rely on the  following widely known lemmas.
\vspace{-0.5cm}
\begin{lemma} \label{lem: kush}
    Let $\mathcal{X} \to  \C^1$ be a flat family with fibres $X_t\subseteq\PP^k$ for $t\in \C^1$, and let $L\subseteq \PP^k$ be a~general linear subspace of codimension equal to ${\rm dim}(X_t)$. Then the linear section $X_t\cap L$ consists of the same number of points for every fibre, namely $\deg(X_t)$ many points, and this is independent of the choice of $t\in \C^1$.
\end{lemma}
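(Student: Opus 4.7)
The plan is to combine two classical ingredients: flatness of $\pi$ forces the numerical invariants of the fibres to be constant, and a Bertini-type genericity argument evaluates the intersection number on a single fibre.

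First I would record that, by flatness of $\pi\colon\mathcal{X}\to\C^1$, the Hilbert polynomial $p(s)\in\Q[s]$ of the fibre $X_t\subseteq\PP^k$ with respect to $\mathcal{O}_{\PP^k}(1)$ is independent of $t$. Consequently, both $r:=\dim X_t=\deg p$ and $d:=\deg X_t = r!\cdot(\text{leading coefficient of }p)$ are constant on the base, so the codimension condition on $L$ is well-defined uniformly in $t$.

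Second, I would assemble the fibrewise linear sections into a single family $\mathcal{Y}:=\mathcal{X}\cap(\C^1\times L)\subseteq\C^1\times\PP^k$ and study the morphism $\mathcal{Y}\to\C^1$. For $L$ general in $\Gr(k-r,\PP^k)$ the fibre $Y_t=X_t\cap L$ has the expected dimension $0$, and the $r$ linear forms defining $L$ restrict to a regular sequence on each $X_t$; by the local criterion for flatness this transfers flatness from $\pi$ to $\mathcal{Y}\to\C^1$. Since a flat family of zero-dimensional projective schemes has constant fibrewise length, $\mathrm{length}(Y_t)$ is independent of $t$. Evaluating this common length at a generic parameter via classical Bertini gives $d$ reduced points, so $|X_t\cap L|=d$ (counted with multiplicity) for every $t\in\C^1$.

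The main obstacle, and the only nontrivial ingredient, is to know that a single generic $L$ can be chosen to make $X_t\cap L$ zero-dimensional for every $t$ simultaneously, not just pointwise in $t$. I would settle this with a relative Bertini argument: the universal incidence locus $B=\{(t,L):\dim(X_t\cap L)>0\}$ is closed in $\C^1\times\Gr(k-r,\PP^k)$, and a dimension count — using that each $t$-slice $B_t$ is a proper closed subvariety of the Grassmannian and that the base $\C^1$ is only one-dimensional — shows that the image of $B$ in $\Gr(k-r,\PP^k)$ lies in a proper closed subvariety, in the complement of which every $L$ works uniformly for every fibre.
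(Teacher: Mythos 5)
The paper does not actually prove \Cref{lem: kush}: it records it as a ``widely known'' consequence of flatness and points the reader to \cite{grothendieck1966eléments, bruns2022determinants}. So there is no in-paper proof to compare against; I can only assess your argument on its own.

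Your first two paragraphs give the right skeleton (constant Hilbert polynomial, hence constant $\dim$ and $\deg$; form $\mathcal{Y}=\mathcal{X}\cap(\C^1\times L)$, argue it is flat with zero-dimensional fibres, use constancy of fibre length), but the closing dimension count has a genuine gap. From ``each slice $B_t$ is a proper closed subvariety of $\Gr$'' (i.e.\ $\operatorname{codim} B_t\geq 1$) together with $\dim\C^1=1$ you conclude that the image of $B$ in $\Gr(k-r,\PP^k)$ lies in a proper closed subset. That does not follow: these hypotheses only give $\dim B\leq 1+(\dim\Gr-1)=\dim\Gr$, and a constructible subset of $\Gr$ of full dimension can be dense, in which case no single ``general'' $L$ avoids all bad fibres. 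What actually makes the step work is the stronger, classical fact that each $B_t$ has codimension $\geq 2$ in the Grassmannian; then $\dim B\le\dim\Gr-1$ and your conclusion holds. The codimension-two bound is not free. One short route: the incidence $\mathcal{U}_t=\{(x,L): x\in X_t\cap L\}$ has $\dim\mathcal{U}_t=\dim\Gr$ (the Schubert cycle of $L$'s through a fixed point of $X_t$ has codimension $r$), so the projection $q\colon\mathcal{U}_t\to\Gr$ is dominant with zero-dimensional generic fibre; if $\dim B_t\geq\dim\Gr-1$, then $q^{-1}(B_t)$ would have dimension $\geq\dim B_t+1\geq\dim\Gr=\dim\mathcal{U}_t$ while being a proper closed subset of $\mathcal{U}_t$, a contradiction. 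Inserting this repairs your proof; without it, the crucial uniform choice of $L$ is unjustified.
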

\noindent The same statement is true in the multiprojective setting. 
\begin{lemma} \label{lem: bern}
    Let $\mathcal{X} \to  \C^1$ be a flat family with fibres $X_t\subseteq\mathbf{P}^{k_1, \dots, k_m}$ for $t\in \C^1$, and let 
    \[
\mathbf{L}:= L_1 \times \cdots \times L_m \subset \mathbf{P}^{k_1, \ldots , k_m},
\]
be a product of general linear subspaces $L_r \subseteq \PP^{k_r}$ of codimension~$i_r$ for $1 \leq r \leq m$ such that $i_1 + \cdots + i_m = \dim X_t$. Then, for any such tuple $(i_1, \dots, i_m)$, the linear section $X_t\cap \mathbf{L}$ consists of the same number of points for every fibre, namely $\mathbf{e}(i_1, \dots, i_m)(X_t)$ many points, and this is independent of the choice of $t\in \C^1$.
\end{lemma}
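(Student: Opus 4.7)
The plan is to combine the geometric interpretation of mixed multidegrees as cardinalities of general linear sections with the constancy of the multigraded Hilbert polynomial in flat families. This is the $\Z^m$-graded analogue of the standard proof of \Cref{lem: kush}.

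First, for a single fibre $X_t$ of pure dimension $d = i_1 + \cdots + i_m$, the mixed multidegree $\mathbf{e}(i_1,\dots,i_m)(X_t)$ is, by definition, the coefficient of $d_1^{i_1}\cdots d_m^{i_m}$ in the multigraded Hilbert polynomial $P_{X_t}(d_1,\dots,d_m)$ of the multihomogeneous coordinate ring of $X_t$. A Bertini-type transversality argument then shows that $\mathbf{e}(i_1,\dots,i_m)(X_t)$ equals the number of reduced points of $X_t \cap \mathbf{L}$ for a general product $\mathbf{L} = L_1 \times \cdots \times L_m$ of linear subspaces of codimensions $i_r$. Second, since $\pi$ is flat and $\C^1$ is connected, the polynomial $P_{X_t}$ is itself independent of $t$; this is the $\Z^m$-graded extension of the classical result on flat families of projective subschemes, obtainable, e.g., by applying the single-graded constancy to each of the finitely many Veronese re-embeddings needed to recover all coefficients of $P_{X_t}$ (see \cite{grothendieck1966eléments, bruns2022determinants}). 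Combining these two facts gives the $t$-independence of $\mathbf{e}(i_1,\dots,i_m)(X_t)$.

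To upgrade this to a single $\mathbf{L}$ that works simultaneously on every fibre, I would apply relative Bertini to the total space $\mathcal{X} \subseteq \mathbf{P}^{k_1,\dots,k_m} \times \C^1$: for $\mathbf{L}$ outside a proper closed subset of the parameter space of such products, the scheme $\mathcal{X} \cap (\mathbf{L} \times \C^1)$ is finite over $\C^1$, and generic flatness lets us arrange, after shrinking if necessary, that the induced morphism to $\C^1$ is flat. The fibrewise cardinality of a finite flat morphism to a connected base is constant, so the count is the same in every fibre, and by the previous paragraph it equals $\mathbf{e}(i_1,\dots,i_m)(X_t)$.

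The main obstacle is precisely this uniform choice of $\mathbf{L}$: each fibre individually admits its own Zariski-open set of ``good'' products of linear subspaces, and one has to rule out that the intersection of these open sets, taken over $t \in \C^1$, could be empty. Working in the total space and invoking the relative version of Bertini circumvents this, but requires a careful check that the bad locus in the parameter space of linear sections is still a proper closed subset; all other ingredients are direct extensions of the single-projective-space case.
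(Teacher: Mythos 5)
The paper does not actually prove \Cref{lem: bern} (or its projective companion \Cref{lem: kush}); both are quoted as ``widely known lemmas'' with citations to \cite{grothendieck1966eléments,bruns2022determinants} and the remark that the multidegree polynomial is treated in \cite[Ch.~8]{73608}. So there is no proof in the paper to compare against literally, and your plan should be judged as a reconstruction of the standard folklore argument. At that level, your two main ingredients --- (i)~the $\Z^m$-graded Hilbert polynomial is constant along a flat family over a connected base, recoverable from the single-graded case by running through finitely many Segre--Veronese twists $\mathcal{O}(d_1,\dots,d_m)$, and (ii)~the leading coefficients of that polynomial are the multidegrees and are realized by the cardinality of a general linear section (a Bertini statement) --- are exactly the standard ones and are correct.

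The one place where the write-up would need repair is the relative-Bertini paragraph. You appeal to ``generic flatness'' to make $\mathcal{X}\cap(\mathbf{L}\times\C^1)\to\C^1$ flat ``after shrinking if necessary.'' Shrinking the base discards fibres, and the statement being proved asserts a conclusion \emph{for every} $t\in\C^1$ --- in the toric-degeneration application the fibre $t=0$ is precisely the one you cannot afford to drop. The clean fix is to avoid generic flatness altogether and use the slicing criterion: $\mathcal{X}\to\C^1$ is flat, and each of the $n=\dim X_t$ linear forms cutting out $\mathbf{L}$ can be chosen so that it is a non-zero-divisor on every fibre of the intermediate family (this is the genericity statement you need to verify, and it is where the dimension count on the bad locus in $G\times\C^1$, with $G$ the parameter space for $\mathbf{L}$, does real work because the base is a curve); then each slice remains flat with no shrinking. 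Alternatively, note that the lemma as stated only needs the \emph{count} $\mathbf{e}(i_1,\dots,i_m)(X_t)$ to be $t$-independent, and for that purpose one may use a $t$-dependent general $\mathbf{L}$, in which case ingredient (ii) applied fibre-by-fibre plus ingredient (i) already suffices and the uniform-$\mathbf{L}$ discussion can be dropped. Either route is fine; as written, ``shrinking'' is a genuine gap.
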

\noindent The functions $\mathbf{e}(i_1, \dots, i_m)(X)$ are called \emph{multidegrees} of the multiprojective variety $X \subseteq \mathbf{P}^{k_1, \ldots, k_m}$. They correspond to coefficients of a homogeneous polynomial of degree ${\rm codim}(X)$ \cite[Ch.8]{73608}.

\smallskip

When a flat family is a toric degeneration, the discussed properties are easy to understand, since toric varieties are well-studied and have a nice combinatorial description. In particular, the degree of a projective toric variety is given by the normalized volume of the corresponding polytope in $\R^n$. To simplify the statements, we will assume that all the polytopes associated to toric varieties are full-dimensional, that is, we assume $\dim X = n$. The following result is due to Kushnirenko \cite{Kouchnirenko1976}.

\begin{theorem}[Kushnirenko \cite{Kouchnirenko1976}] \label{thm: Kush}
Let $X\subseteq \PP^k$ be a projective toric variety parameterized by Laurent monomials $\bfx^{\bfa_0},\dots, \bfx^{\bfa_k}$, with $\bfa_i\in \Z^n$, and let $P\subseteq \R^n$ be the convex hull of $\{\bfa_0,\dots, \bfa_k\}$. Then the degree of $X$ is the normalized volume of $P$:
\[ \deg(X)=n!\ {\rm Vol}(P).\]
In particular, ${\rm Vol}(P)$ is the leading coefficient of the Hilbert polynomial of $X$.
\end{theorem}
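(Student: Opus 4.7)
I would approach this via the Hilbert polynomial, since this directly yields the ``in particular'' clause as well. Recall that for a projective variety $X\subseteq \PP^k$ of dimension $n$, one has $\deg(X) = n!\, c_n$ where $c_n$ is the leading coefficient of the Hilbert polynomial $m\mapsto \dim_\C \C[X]_m$ for $m\gg 0$, so the task reduces to showing $c_n = {\rm Vol}(P)$.

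The first step is to give a combinatorial description of the graded pieces. Since $X$ is the closure of the image of $\phi\colon\bfx\mapsto[\bfx^{\bfa_0}:\cdots:\bfx^{\bfa_k}]$, pulling back via $\phi$ identifies the degree-$m$ part of $\C[X]$ with the $\C$-span of the Laurent monomials $\bfx^{\bfa_{i_1}+\cdots+\bfa_{i_m}}$ for $0\le i_j\le k$, giving
\[
\dim_\C \C[X]_m \;=\; |mA|,
\]
the number of distinct points in the $m$-fold Minkowski sum of $A = \{\bfa_0,\dots,\bfa_k\}$. After translating by $-m\bfa_0$ and passing to the lattice $\Lambda\subseteq\Z^n$ generated by the differences $\bfa_i-\bfa_0$, which I assume equals $\Z^n$ (i.e.\ $\phi$ is birational onto $X$), one has $mA \subseteq mP\cap \Lambda$. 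Conversely, every lattice point of $mP$ is a sum of $m$ elements of $A$ up to an $O(m^{n-1})$ deviation near the boundary, and Ehrhart's theorem for lattice polytopes gives
\[
|mP\cap \Lambda| \;=\; {\rm Vol}(P)\, m^n + O(m^{n-1}),
\]
so the Hilbert polynomial has leading coefficient ${\rm Vol}(P)$, proving both claims at once.

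The main technical hurdle is controlling $|mP\cap \Lambda| - |mA|$. This rests on the classical fact that the semigroup $\bigcup_m (mA,m)\subseteq \Z^n\times\Z$ agrees with its saturation (the integer points of the rational cone it spans) outside a set of density zero, so that the discrepancy between $mA$ and $mP\cap\Lambda$ is absorbed into the lower-order terms of the Hilbert polynomial. Alternatively, one can sidestep the Hilbert polynomial entirely and invoke \Cref{lem: kush}: pulling back $n$ generic hyperplanes cutting out a codimension-$n$ linear section of $X$ yields $n$ Laurent polynomials with Newton polytope $P$ in $(\C^*)^n$, and the Bernstein--Kushnirenko theorem delivers $n!\,{\rm Vol}(P)$ common roots; this is shorter but cites BKK as a black box and does not give the Hilbert-polynomial refinement for free.
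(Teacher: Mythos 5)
The paper does not prove \Cref{thm: Kush}: it cites Kushnirenko and remarks only that the result is the unmixed special case of the multiprojective BKK statement \Cref{thm: MixVol}, via $\mathrm{MV}(P,\dots,P)=n!\,\mathrm{Vol}(P)$. Your main route — identifying $\dim_\C\C[X]_m$ with $|mA|$, the number of points in the $m$-fold Minkowski sum, and matching it asymptotically with the Ehrhart count $|mP\cap\Lambda|$ — is a genuinely different and more intrinsic argument, and it yields the Hilbert-polynomial refinement at the same time, which the BKK route does not. The alternative you sketch at the end (slice by $n$ generic hyperplanes and apply Bernstein--Kushnirenko to the pulled-back Laurent system) is essentially the paper's one-line remark, restated as a torus-solution count rather than as the unmixed case of a mixed-volume statement.

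Two points in the Hilbert-polynomial argument deserve to be made precise. The step you flag as the main technical hurdle — that $|mP\cap\Lambda|-|mA|=O(m^{n-1})$ — does not follow from a loose ``density zero near the boundary'' heuristic; the correct input is Khovanskii's theorem that $|mA|$ is eventually a polynomial in $m$, or equivalently that the saturation $\mathrm{Cone}(A\times\{1\})\cap(\Lambda\times\Z)$ (whose semigroup algebra is the normalization of $\C[X]$) is a finitely generated module over $\C[X]$, so the two Hilbert polynomials agree in degree and leading coefficient. Citing this fact explicitly closes the argument. Second, your added hypothesis $\Lambda=\Z^n$ is in fact needed for the formula exactly as stated: if $[\Z^n:\Lambda]=d>1$ then Bernstein--Kushnirenko still gives $n!\,\mathrm{Vol}(P)$ solutions in the torus, but these sit in $d$-element fibres of $\phi$, so $\deg(X)=n!\,\mathrm{Vol}(P)/d$. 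The paper's statement is therefore only literally correct under this implicit birationality assumption, which is consistent with the separate bookkeeping of $\deg(\phi_0)$ in \Cref{prop: number sols Sagbi homotopy}; it is good that you surfaced it.
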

 Similarly, multidegrees of a multiprojective toric variety are given by the mixed volume of the corresponding polytopes. The result below is also known as the BKK (Bernstein-Khovanskii-Kushnirenko) Theorem for \emph{sparse} polynomial systems, see \cite{bernstein}.
\begin{theorem}[{\cite[Theorem 5.4]{KavehKhov2012}}] \label{thm: MixVol}
    Let $X\subseteq \mathbf{P}^{k_1, \ldots, k_m}$ be a multiprojective toric variety parameterized by Laurent monomials $(\bfx^{\bfa_{1j}})_{j=0,\dots, k_1}, \dots, (\bfx^{\bfa_{mj}})_{j=0,\dots,k_m}$ with $\bfa_{ij}\in \Z^n$. Let $P_i$ be the convex hull of $\{\bfa_{ij}: j=0,\dots, k_i\}$. Then the multidegree of $X$ is the mixed volume 
    \[\mathbf{e}(i_1, \dots, i_m)(X)= \mathrm{MV}(\underbrace{P_1, \dots, P_1}_{i_1}, \dots, \underbrace{P_m, \dots, P_m}_{i_m}), \quad i_1 + \dots + i_m = n.\]
    %\[ \mathbf{e}(i_1, \dots, i_k)(X)= {\rm MV}(i_1A_1,\dots,i_mA_m). \]
    
\end{theorem}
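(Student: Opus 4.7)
The plan is to reduce the statement to Bernstein's theorem for sparse polynomial systems by combining it with \Cref{lem: bern}. First, choose general linear subspaces $L_r \subseteq \PP^{k_r}$ of codimension $i_r$, for $r=1,\dots,m$, with $i_1 + \cdots + i_m = n$. \Cref{lem: bern} gives
\[
\mathbf{e}(i_1, \dots, i_m)(X) \;=\; |X \cap \mathbf{L}|, \qquad \mathbf{L} = L_1 \times \cdots \times L_m,
\]
so the task becomes counting these intersection points.

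Next I would pull back to the torus along the monomial parameterization
\[
\phi \colon (\C^*)^n \dashrightarrow X, \qquad \bfx \mapsto \bigl((\bfx^{\bfa_{1j}})_{j=0,\dots,k_1}, \dots, (\bfx^{\bfa_{mj}})_{j=0,\dots,k_m}\bigr).
\]
A codimension-$i_r$ subspace $L_r \subseteq \PP^{k_r}$ is cut out by $i_r$ general linear forms in the $r$-th block of homogeneous coordinates. Pulling such a form back along $\phi$ produces a Laurent polynomial on $(\C^*)^n$ supported on $\{\bfa_{rj} : j=0,\dots,k_r\}$; since its coefficients are generic linear combinations of those of the original linear form, the Newton polytope of the pullback is exactly $P_r$ and the coefficients are generic. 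Consequently $\phi^{-1}(X \cap \mathbf{L})$ is cut out in $(\C^*)^n$ by a system of $n$ generic Laurent polynomials of which $i_r$ have Newton polytope $P_r$. By Bernstein's theorem \cite{bernstein}, the number of solutions equals
\[
\mathrm{MV}\bigl(\underbrace{P_1, \dots, P_1}_{i_1}, \dots, \underbrace{P_m, \dots, P_m}_{i_m}\bigr).
\]

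The main obstacle is passing from the count on the torus to the count on $X$, since $\phi$ may be generically $d$-to-one onto its image. The covering degree $d$ equals the index in $\Z^n$ of the sublattice generated by all differences $\bfa_{ij}-\bfa_{ij'}$. Under the standing assumption that this sublattice is all of $\Z^n$, so that $\phi$ is birational onto $X$, the torus count agrees with $|X\cap \mathbf{L}|$ and combining the two steps yields the theorem. If one wished to drop this assumption, both sides would scale by the same factor once mixed volumes are normalized with respect to the actual sublattice spanned by the exponents, preserving the identity.
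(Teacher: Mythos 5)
The paper does not supply a proof of \Cref{thm: MixVol}: the statement is cited verbatim from \cite{KavehKhov2012} (Theorem 5.4), and the surrounding text only notes that \Cref{thm: Kush} follows as the unmixed special case. You have therefore produced a self-contained derivation where the paper only cites, so there is no ``paper's own proof'' to match yours against.

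Your argument is the standard bridge between the toric-variety formulation and Bernstein's theorem for sparse systems, and it is essentially sound. The first step, that $\mathbf{e}(i_1,\dots,i_m)(X)$ equals $|X\cap\mathbf{L}|$ for generic $\mathbf{L}$, is really the defining property of multidegree rather than something specific to the flat-family content of \Cref{lem: bern}, though invoking that lemma on a constant family is harmless. The pullback of $i_r$ generic linear forms in the $r$-th block to $n$ generic Laurent polynomials with Newton polytopes $P_r$, followed by Bernstein's count \cite{bernstein}, is exactly right. You also correctly identify the genuine subtlety that the paper leaves implicit: full-dimensionality of the $P_i$ (which the paper does assume) does not by itself force the monomial parameterization to be birational, and if the exponent differences span a sublattice of index $d>1$ in $\Z^n$ the mixed volume overcounts $|X\cap\mathbf{L}|$ by a factor of $d$. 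That the paper is aware of this is visible in \Cref{prop: number sols Sagbi homotopy}, where $\deg(\phi_0)$ appears explicitly as the correction factor. Your closing remark about renormalizing mixed volumes against the exponent lattice is stated loosely, but the intent is right and the proof holds under the implicit birationality hypothesis.
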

\noindent Clearly, since ${\rm MV}(P,\dots,P)$ is equal to $n!\ {\rm Vol}(P)$, \Cref{thm: Kush} is a direct corollary of \Cref{thm: MixVol}. 

\bigskip

We~focus on the toric degenerations which come from \emph{Gröbner degenerations}. More precisely, let~$\C[X]$ be the coordinate ring of a variety $X$, that can be represented as a factor ring $\C[\mathbf{z}]/I$, where~$\mathbf{z}=z_0,\dots,z_k$, and $I=\calI(X)$ is the defining ideal of $X$. We consider  its initial ideal:
$$\mathrm{in}_{v}(I)  = \mathrm{span}_{\C}\{\mathrm{in}_{v}(f) \mid f\in I\}$$ 
with respect to a weight vector $v\in \Z^{k+1}$ and the corresponding variety $Y=\mathcal{V}(\mathrm{in}_{v}(I))$ it defines. We refer to \cite[Ch.1]{sturmfelsgröbner} for an introduction to weight representation of term orders.

\medskip

We extend our polynomial ring to $\C[\mathbf{z},t]$ by adding an extra variable $t$. The torus~$\mathbb{C}^\times$ acts on~$\C[\mathbf{z}]\subseteq \C[\mathbf{z},t]$ by~$t \cdot \mathbf{z}^\alpha := t^{-v \cdot \alpha} \mathbf{z}^\alpha$. For a polynomial~$f = \sum c_\alpha \mathbf{z}^{\alpha} \in \C[\mathbf{z}]$ we set~$v(f) := \underset{\alpha}{\max} \{ v \cdot \alpha\}$ and define the~\emph{homogenization} of the polynomial $f$ and the ideal~$I$  with respect to a weight $v$  as:
\[\label{weight_degeneration}
f_t^{v} := t^{v(f)} (t \cdot f), \quad I_t^{v} := (  f_t^{v} \mid f\in I) \subseteq \C[\mathbf{z},t].  \]
The following flat family of varieties is called a \emph{Gröbner degeneration} with respect to a weight $v$.
\begin{proposition} [{\cite[Theorem~15.17]{Eisenbud}}] \label{prop: flatfactorrings}
    The $\C[t]$-algebra $\C[\mathbf{z}, t]/I_t^v$ is flat as a $\C[t]$-module. The special fibre over $t=0$ is $\C[\mathbf{z}]/\mathrm{in}_{v}(I)$ and the general fibre is isomorphic to $\C[\mathbf{z}]/I$.
\end{proposition}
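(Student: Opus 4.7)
The plan is to establish flatness via torsion-freeness after equipping $\C[\mathbf{z},t]$ with a grading in which $I_t^v$ is homogeneous. Assign $\deg(z_i) = v_i$ (shifting $v$ by a large constant vector first if any $v_i < 0$, which does not alter $I_t^v$ when $I$ is homogeneous) and $\deg(t) = 1$. A direct expansion shows $f_t^v = \sum_\alpha c_\alpha t^{v(f) - v\cdot \alpha}\mathbf{z}^\alpha$ is homogeneous of degree $v(f)$ in this grading, so $I_t^v$ is a homogeneous ideal and $A := \C[\mathbf{z},t]/I_t^v$ is a graded $\C[t]$-module. Since a graded module over the PID $\C[t]$ is flat if and only if it is torsion-free, flatness reduces to showing that multiplication by $t$ is injective on $A$.

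The main step is the characterization $I_t^v = J$, where $J$ is the set of all homogeneous $F \in \C[\mathbf{z},t]$ with $F|_{t=1} \in I$. The inclusion $I_t^v \subseteq J$ is immediate from $f_t^v|_{t=1} = f$. For the reverse, let $F \in J$ be homogeneous of degree $d$, write $F = t^a F'$ with $t \nmid F'$, and set $g := F'|_{t=1} = F|_{t=1} \in I$. Both $F'$ and $g_t^v$ are homogeneous lifts of $g$ to $\C[\mathbf{z},t]$ not divisible by $t$; matching the coefficient of $\mathbf{z}^\alpha$ in $g$ against the unique term $t^{d - v\cdot\alpha}\mathbf{z}^\alpha$ allowed by homogeneity forces $d = v(g)$ and $F' = g_t^v$, hence $F = t^a g_t^v \in I_t^v$. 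With the characterization in hand, if $tG \in I_t^v$ for some homogeneous $G$, then $G|_{t=1} = (tG)|_{t=1} \in I$, so $G \in J = I_t^v$. This shows $t$ is a non-zero-divisor on $A$, establishing $\C[t]$-flatness.

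For the fibres, reducing $f_t^v$ modulo $t$ retains precisely the terms with $v\cdot\alpha = v(f)$, which is $\mathrm{in}_v(f)$ by definition; since $I_t^v$ is generated by $\{f_t^v : f \in I\}$, the special fibre is $A/tA = \C[\mathbf{z}]/\mathrm{in}_v(I)$. For the general fibre, substituting $t=1$ sends each $f_t^v$ to $f$, yielding $A/(t-1)A \cong \C[\mathbf{z}]/I$, and by flatness the fibre over any $t = c \neq 0$ is isomorphic to this one. The principal obstacle throughout is the identification $I_t^v = J$, which rests on the uniqueness of the $v$-homogenization; the rest of the argument is formal manipulation of generators and reduction modulo $t$.
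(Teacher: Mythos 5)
The paper does not prove this proposition itself --- it cites \cite[Theorem~15.17]{Eisenbud}, whose argument fixes a Gr\"obner basis of $I$ and exhibits the standard monomials as an explicit $\C[t]$-basis of $\C[\mathbf{z},t]/I_t^v$, thereby proving \emph{freeness}, not merely flatness. Your route via torsion-freeness over the PID $\C[t]$ is more elementary in that it sidesteps Gr\"obner bases entirely, at the cost of concluding only flatness, which is all the proposition needs. The key lemma --- a homogeneous $F \in \C[\mathbf{z},t]$ lies in $I_t^v$ iff $F|_{t=1} \in I$ --- is correctly reduced to the uniqueness of the $v$-homogenization, and from it non-zero-divisoriality of $t$ follows at once. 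You state but do not justify that for a graded $\C[t]$-module this already gives torsion-freeness; it is worth noting the one-line reason (compare lowest-degree graded pieces of $p(t)$ and a would-be torsion element), especially since your grading shift is exactly what makes the degrees bounded below and the argument go through.

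Two corrections. You write that matching coefficients forces $d = v(g)$; in fact it is $\deg F' = d - a$ that equals $v(g)$, though the conclusion $F = t^a g_t^v \in I_t^v$ is unaffected. More substantively, the closing claim that ``by flatness the fibre over any $t = c \neq 0$ is isomorphic'' to the $t=1$ fibre is not a consequence of flatness --- flat families routinely have non-isomorphic fibres. The correct (and equally short) argument is the $\C^\times$-action built into the construction: for $c \neq 0$, the automorphism $\sigma_c\colon z_i \mapsto c^{-v_i} z_i$ of $\C[\mathbf{z}]$ satisfies $f_t^v|_{t=c} = c^{v(f)}\sigma_c(f)$, so $I_t^v|_{t=c} = \sigma_c(I)$ and hence $\C[\mathbf{z}]/I_t^v|_{t=c} \cong \C[\mathbf{z}]/I$. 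With that substitution your proof is complete.
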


\begin{remark}
When $\mathrm{in}_{v}(I)$ is a binomial prime ideal, i.e., a toric ideal, the special fibre is a coordinate ring of a toric variety and the introduced Gröbner degeneration gives us a toric degeneration. On the other hand, since we are considering embedded varieties, every toric degeneration arising from a SAGBI (or even Khovanskii) basis is indeed a Gröbner degeneration. We refer to Section 3.3 of the survey \cite{bossinger2023surveytoricdegenerationsprojective} for a detailed discussion.
\end{remark}

\bigskip

Now, assume that the~variety $X$ is unirational and parameterized by a finite set of polynomials $$\mathcal{B}=\{b_0,\dots,b_k\}\subseteq \C[\mathbf{x}]=\C[x_1,\dots,x_n].$$ Equivalently,
% $X$ is the closure of the image of the map:
% \[ \phi: \C^n \dashrightarrow \PP^k \quad \mathbf{x}\mapsto (b_0(\mathbf{x}):\dots:b_k(\mathbf{x})). \]
the coordinate ring of $X$ is a subalgebra of the polynomial ring $\C[\mathbf{x}]$ generated by $\mathcal{B}$:
\[ S:= \C[b_0,\dots,b_k], \quad S \simeq \C[\mathbf{z}]/I. \]
Consider a term order~$\succ$ on~$\C[\mathbf{x}]$. Analogously to the ideals case, the initial algebra~$\mathrm{in}_{\succ}(S)$ of~$S$ is the monomial subalgebra in~$\C[\mathbf{x}]$ spanned by the~$\succ$-initial terms of the polynomials from $S$:
$$ \mathrm{in}_{\succ}(S) = \mathrm{span}_{\C}\{\mathrm{in}_{\succ}(f) \mid f \in S\}.$$
\vspace{-0.8cm}
\begin{definition}
The finite set of polynomials $\mathcal{B}$ is \emph{a SAGBI basis} for~$S$ if
$ \mathrm{in}_{\succ}(S) = \C[\mathrm{in}_{\succ} (b) \mid b \in \mathcal{B}]$.
\end{definition}
It follows immediately from the definition that an algebra $S$ has a finite SAGBI basis with respect to $\succ$ if and only if the initial algebra~$\mathrm{in}_{\succ}(S)$ is finitely generated. SAGBI bases were introduced by Robbiano and Sweedler in \cite{robbiano2006subalgebra} as an analog of Gröbner bases for ideals for subalgebras~of~polynomial rings. The crucial difference is that not every polynomial subalgebra admits a finite SAGBI basis. The reason is that noetherianity of the polynomial ring is not inherited by subalgebras. Thus, an initial algebra of a finitely generated subalgebra may not necessarily be finitely generated itself.

\begin{remark}[Khovanskii basis] 
    A generalization of SAGBI bases is defined in \cite{KM} for any finitely generated algebra, not necessarily contained in a polynomial ring, equipped with a valuation to an ordered abelian group. A set $\mathcal{B}\subseteq A$  is a \emph{Khovanskii basis} for a finitely generated algebra equipped with a valuation $(A,\nu)$ if its image generates the associated graded algebra ${\rm gr}_\nu(A)$.    
    SAGBI bases for subalgebras of polynomial rings fall into this broader class because they arise as Khovanskii bases associated to specific valuations, called \emph{leading term valuations}, induced by term orders:
    \[ \nu: A\setminus\{0\}\rightarrow (\Z^n,\succ), \quad f\mapsto \text{ exponent of } \mathrm{in}_{\succ}(f).\]
    %\[ \nu:R\setminus\{0\}\rightarrow (\Z^n,\prec), \,\mathbf{z}^a\mapsto a.\] Here the image of $\nu$ is ordered by saying that $a\prec b$ if and only if $\mathbf{z}^a \prec \mathbf{z}^b$. 
    
    % SAGBI bases are a special class of \emph{Khovanskii bases}. Kaveh and Manon define them to extend the theory of SAGBI bases to finitely generated algebras that are not necessarily contained in a polynomial ring \cite{KM}. Khovanskii bases rely on the existence of a \emph{valuation} from a finitely generated algebra to an ordered abelian group.
    %In the case of a subalgebra $R\subseteq \C[\mathbf{z}]$ a valuation is induced by a monomial order $\prec$ by $\nu:R\setminus\{0\}\rightarrow (\Z^n,\prec), \mathbf{z}^a\mapsto a$. Here the image of $\nu$ is ordered by saying that $a\prec b$ if and only if $\mathbf{z}^a \prec \mathbf{z}^b$. 
\end{remark}
%\begin{proposition}
    %The algebra $S$ has a finite SAGBI basis with respect to $\succ$ if and only if the initial algebra~$\mathrm{in}_{\succ}(S)$ is finitely generated. 
%\end{proposition}

We show that the existence of a finite SAGBI basis for $S=\C[X]$ implies the existence of a toric Gröbner degeneration of~$X$. Let~$\omega \in \mathbb{Z}^{n}$ be any weight vector compatible with~$\succ$ on~$\mathcal{B}$. That is, for~$b_i \in \mathcal{B}$ with~$\mathrm{in}_{\succ}(b_i) = c \, \mathbf{x}^{\alpha_i}$, for any nonzero term~$c \, \mathbf{x}^\beta$ of~$b_i$, we have~$\omega \cdot \alpha_i \geq \omega \cdot \beta$ with equality only if~$\alpha_i = \beta$. We set $\mathcal{A}$ to be the $n \times (k+1)$ matrix whose columns are the leading exponents $\alpha_i$:
\[\mathcal{A} = \left[
\begin{array}{c|c|c|c}
\alpha_0 & \alpha_1 & \cdots & \alpha_k
\end{array}
\right],\]
%$ \begin{footnotesize}
%\begin{bmatrix} \alpha_0 & \alpha_ 1 & \ldots & \alpha_k\end{bmatrix} \end{footnotesize}$ 
and consider the weight vector
$v := \mathcal{A}^{T}\omega$ in $\R^{k+1}$. 
\begin{theorem} [{\cite[Theorem 11.4]{sturmfelsgröbner}}]
 The set $\mathcal{B} = \{b_0,\dots,b_k\}$ is a SAGBI basis for the subalgebra~$S \simeq \C[\mathbf{z}]/I$ if and only if the initial ideal $\mathrm{in}_{v}(I)$ is a toric ideal defining the variety $\mathrm{Spec}(\mathrm{in}_{\omega}(S))$.
\end{theorem}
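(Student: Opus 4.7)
The plan is to analyze two surjections from $\C[\mathbf{z}]$: the defining parameterization $\pi\colon\C[\mathbf{z}]\to S$, $z_i\mapsto b_i$, with $\Ker(\pi)=I$, and its ``monomialization'' $\pi_\omega\colon\C[\mathbf{z}]\to\C[\mathbf{x}]$, $z_i\mapsto\mathrm{in}_\omega(b_i)=c_i\mathbf{x}^{\alpha_i}$. Since each $\mathrm{in}_\omega(b_i)$ is a scalar times a monomial, $J_{\mathcal{A}}:=\Ker(\pi_\omega)$ is exactly the toric ideal of the configuration $\mathcal{A}$, and the image of $\pi_\omega$ equals $\C[\mathrm{in}_\omega(b_0),\ldots,\mathrm{in}_\omega(b_k)]$, the coordinate ring of the toric variety $\mathrm{Spec}(\C[\mathbf{z}]/J_{\mathcal{A}})$. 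The theorem thus reduces to showing $\mathrm{in}_v(I)=J_{\mathcal{A}}$ if and only if $\mathcal{B}$ is a SAGBI basis, and in that case identifying this toric variety with $\mathrm{Spec}(\mathrm{in}_\omega(S))$.

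First I would establish the inclusion $\mathrm{in}_v(I)\subseteq J_{\mathcal{A}}$ unconditionally. Take $f=\sum_\gamma c_\gamma z^\gamma\in I$ and write $\mathrm{in}_v(f)=\sum_{\gamma\in\Gamma}c_\gamma z^\gamma$, where $\Gamma$ collects the exponents of maximal $v$-weight $v\cdot\gamma=\omega\cdot(\mathcal{A}\gamma)=v(f)$. Compatibility of $\omega$ with $\succ$ on $\mathcal{B}$ ensures $\mathrm{in}_\omega(b_i)=\mathrm{in}_\succ(b_i)$ is a single term, so $\mathrm{in}_\omega(b^\gamma)=\prod\mathrm{in}_\omega(b_i)^{\gamma_i}=\pi_\omega(z^\gamma)$ is a monomial of $\omega$-weight $v\cdot\gamma$. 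The identity $0=\pi(f)=\sum_\gamma c_\gamma b^\gamma$ in $\C[\mathbf{x}]$ forces its top-$\omega$-weight part to vanish, i.e.\ $\sum_{\gamma\in\Gamma}c_\gamma\pi_\omega(z^\gamma)=\pi_\omega(\mathrm{in}_v(f))=0$, which gives $\mathrm{in}_v(f)\in J_{\mathcal{A}}$.

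Next I would compare Hilbert series to promote this inclusion to an equality exactly in the SAGBI case. Grade $\C[\mathbf{z}]$ by $\deg(z_i):=\omega\cdot\alpha_i$, so that both $J_{\mathcal{A}}$ and $\mathrm{in}_v(I)$ are homogeneous, $\pi_\omega$ is degree-preserving into $\C[\mathbf{x}]$ with the $\omega$-grading, and the $v$-filtration on $\C[\mathbf{z}]/I$ is compatible under $\pi$ with the $\omega$-filtration on $S$ (since $v\cdot\gamma$ equals the $\omega$-weight of $\mathrm{in}_\omega(b^\gamma)$). The flatness of the Gröbner degeneration (\Cref{prop: flatfactorrings}) identifies the associated graded of $\C[\mathbf{z}]/I$ with $\C[\mathbf{z}]/\mathrm{in}_v(I)$, so their filtered/graded Hilbert series coincide; similarly the associated graded of $S$ under the $\omega$-filtration is $\mathrm{in}_\omega(S)$. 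Chaining these gives
\[
H(\C[\mathbf{z}]/\mathrm{in}_v(I),d)=H(S,d)=H(\mathrm{in}_\omega(S),d)\;\geq\; H(\C[\mathrm{in}_\omega(b_0),\ldots,\mathrm{in}_\omega(b_k)],d)=H(\C[\mathbf{z}]/J_{\mathcal{A}},d),
\]
where the single inequality comes from the inclusion of subalgebras $\C[\mathrm{in}_\omega(b_i)]\subseteq\mathrm{in}_\omega(S)$. Since $\mathrm{in}_v(I)\subseteq J_{\mathcal{A}}$, equality of the outer Hilbert series is equivalent to equality in the middle, which is exactly the SAGBI condition $\C[\mathrm{in}_\omega(b_i)]=\mathrm{in}_\omega(S)$.

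The main obstacle I expect is the bookkeeping that synchronizes all three gradings/filtrations — the grading $\deg(z_i)=\omega\cdot\alpha_i$, the $\omega$-filtration on $S$, and the $v$-filtration on $\C[\mathbf{z}]/I$ — so that $\pi_\omega$ is genuinely the associated graded of $\pi$ and the filtered-to-graded Hilbert series identities apply. Once this is set up carefully, both directions of the biconditional fall out of the inequality chain, and in the SAGBI case $\mathrm{Spec}(\mathrm{in}_\omega(S))$ is identified with the toric variety $\mathcal{V}(J_{\mathcal{A}})$.
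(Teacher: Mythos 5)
The paper does not prove this statement itself — it cites Theorem~11.4 of Sturmfels' \emph{Gröbner Bases and Convex Polytopes} — so you are being compared against the textbook argument, which is indeed the Hilbert-function sandwich you sketch. Your two surjections $\pi$ and $\pi_\omega$, the unconditional inclusion $\mathrm{in}_v(I)\subseteq J_{\mathcal{A}}$ (via the vanishing of the top $\omega$-weight slice of $0=\sum c_\gamma b^\gamma$, using that each $\mathrm{in}_\omega(b_i)$ is a single term so $\mathrm{in}_\omega(b^\gamma)=\pi_\omega(z^\gamma)$ has weight $v\cdot\gamma$), the containment $\C[\mathrm{in}_\omega(b_i)]\subseteq\mathrm{in}_\omega(S)$, and the final squeeze are all the right ingredients, and you correctly identify $\mathrm{Spec}(\mathrm{in}_\omega(S))$ with $\mathcal{V}(J_{\mathcal{A}})$ once equality is forced.

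There is, however, a genuine gap exactly where you flag the bookkeeping. Grading $\C[\mathbf{z}]$ by $\deg(z_i)=\omega\cdot\alpha_i$ is not a safe choice: since $\omega$ may have nonpositive entries (the paper's examples use $\omega=(-2,-3)$ etc.), this grading need not be positive, so graded pieces can be infinite-dimensional and Hilbert series undefined. Worse, the link in your chain $H(\C[\mathbf{z}]/\mathrm{in}_v(I),d)=H(S,d)$ rests on the $v$-filtration of $\C[\mathbf{z}]/I$ matching the $\omega$-filtration of $S$ under $\pi$, but the compatibility you cite only gives the one-sided inclusion $\pi\bigl(F^v_d(\C[\mathbf{z}]/I)\bigr)\subseteq F^\omega_d(S)$; the reverse inclusion (producing, for every $h\in S$ with $\omega(h)\le d$, a representative $g$ with $v(g)\le d$) is not automatic and in fact amounts to the very statement being proved. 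For instance, with $b_0=x+y$, $b_1=x-y$, $\omega=(1,0)$, one has $I=0$, $v=(1,1)$, yet $y=\tfrac12(b_0-b_1)$ lies in $F^\omega_0(S)$ while its unique representative $\tfrac12(z_0-z_1)$ has $v$-weight $1$, so the two filtered Hilbert functions differ — and indeed $\{b_0,b_1\}$ is not a SAGBI basis here. The standard fix (and the one Sturmfels uses) is to work with the ordinary degree grading, under which all $b_i$ are homogeneous of the same degree $e$ and one sets $\deg z_i=e$: then $I$, $\mathrm{in}_v(I)$, $J_{\mathcal{A}}$ are all homogeneous, $\C[\mathbf{z}]/I\cong S$ as graded rings so the middle equality is free, $\mathrm{in}_\omega$ preserves this degree on $S$ so $H(S)=H(\mathrm{in}_\omega(S))$, and Gröbner degeneration gives $H(\C[\mathbf{z}]/I)=H(\C[\mathbf{z}]/\mathrm{in}_v(I))$; the $\omega$-weights then only intervene in comparing $\C[\mathrm{in}_\omega(b_i)]$ with $\mathrm{in}_\omega(S)$ inside each finite-dimensional graded piece. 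With that replacement your proof closes.
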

As before, we consider the homogenization of the polynomials from $\mathcal{B}$ and the subalgebra $S$:
\[\label{weight_degeneration_algebra}
b_t^{\omega} := t^{\omega(b)} (t \cdot b), \quad S_t^{\omega} := \C[b_t^{\omega} \mid b \in \mathcal{B}]. 
\]
The following results give rise to our algorithm of the SAGBI homotopy, inspired by the Gröbner homotopy constructed in \cite[Section 2.2]{NumericalHomotopies}. The proof is presented in  \Cref{appendix}.
\begin{proposition} \label{prop: DeformedAlgebrasIsom}
    Let $\mathcal{B}  = \{b_0,\dots,b_k\}$ form a SAGBI basis with respect to~$\omega$ and take $v = \mathcal{A}^{T}\omega$.  Then for every $t \in \C^1$ the subalgebra $S_t^{\omega}$ is isomorphic to  $\C[\mathbf{z},t]/I_t^{v}$.
\end{proposition}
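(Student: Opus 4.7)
The plan is to show that the natural surjective $\C[t]$-algebra map $\Pi\colon \C[\mathbf{z},t] \to S_t^{\omega}$ defined by $z_j \mapsto b_{j,t}^\omega$ and $t \mapsto t$ has kernel exactly $I_t^v$. The easy inclusion $I_t^v \subseteq \ker\Pi$ follows from a direct expansion: writing $b_{j,t}^\omega = t^{\omega\cdot\alpha_j}(t\cdot b_j)$ and using $v\cdot\gamma = \sum_j \gamma_j\,\omega\cdot\alpha_j$, the $t$-powers appearing in $\Pi(f_t^v)$ telescope and leave $t^{v(f)}$ times $t\cdot f(b_0,\ldots,b_k)$, which vanishes because the torus action is a ring automorphism of $\C[\mathbf{x}][t^{\pm 1}]$ and $f(b_0,\ldots,b_k) = 0$ by definition of $I$.

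The reverse inclusion is the crux, and I will obtain it by a flatness-plus-fibrewise-isomorphism argument on $\mathrm{Spec}(\C[t]) = \A^1$. Since $X$ is embedded in (multi)projective space, $I$ is (multi)homogeneous and both $\C[\mathbf{z},t]/I_t^v$ and $S_t^\omega$ inherit a compatible $\Z_{\geq 0}$-grading by $\mathbf{z}$-degree with finitely generated graded pieces over $\C[t]$. Flatness of $\C[\mathbf{z},t]/I_t^v$ over $\C[t]$ is \Cref{prop: flatfactorrings}, and flatness of $S_t^\omega$ follows because it is a subalgebra of the $\C[t]$-free ring $\C[\mathbf{x},t]$, hence torsion-free over the PID $\C[t]$.

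Next, I verify that $\overline{\Pi}\colon \C[\mathbf{z},t]/I_t^v \to S_t^\omega$ is a fibrewise isomorphism over every closed point $t_0 \in \A^1$. For $t_0 \in \C^\times$, the rescalings $\sigma_{t_0}\colon z_j \mapsto t_0^{\omega\cdot\alpha_j} z_j$ on $\C[\mathbf{z}]$ and $\tau_{t_0}\colon \mathbf{x}^\beta \mapsto t_0^{-\omega\cdot\beta}\mathbf{x}^\beta$ on $\C[\mathbf{x}]$ are algebra automorphisms, and a short substitution shows the fibre map $\Pi_{t_0}$ equals $\tau_{t_0} \circ \pi \circ \sigma_{t_0}$, where $\pi\colon z_j \mapsto b_j$ has kernel $I$; thus $\ker\Pi_{t_0} = \sigma_{t_0}^{-1}(I)$, which matches the specialization of $I_t^v$ at $t_0$ up to a unit $t_0^{v(f)}$. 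At $t_0 = 0$, the compatibility of $\omega$ with $\succ$ forces $b_{j,0}^\omega = \mathrm{in}_\succ(b_j)$, and the SAGBI hypothesis combined with \cite[Theorem 11.4]{sturmfelsgröbner} identifies $\mathrm{in}_v(I)$ with the toric ideal of relations among $\mathrm{in}_\succ(b_0), \ldots, \mathrm{in}_\succ(b_k)$, making the fibre map an isomorphism here as well.

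To conclude, tensor the short exact sequence $0 \to \ker\overline{\Pi} \to \C[\mathbf{z},t]/I_t^v \to S_t^\omega \to 0$ with $\C[t]/(t - t_0)$: flatness of $S_t^\omega$ kills $\mathrm{Tor}_1$, preserving exactness, and the fibrewise isomorphism just established forces $\ker\overline{\Pi}/(t-t_0)\ker\overline{\Pi} = 0$ for every closed point $t_0 \in \A^1$. Since each graded piece of $\ker\overline{\Pi}$ is finitely generated over $\C[t]$, Nakayama's lemma kills it, hence $\ker\overline{\Pi} = 0$. The main obstacle I anticipate is the fibre at $t_0 = 0$, where the SAGBI hypothesis does essential work through the Sturmfels criterion: without it one would only have $\mathrm{in}_v(I)$ contained in, rather than equal to, the toric ideal defining $\mathrm{Spec}(\mathrm{in}_\omega(S))$.
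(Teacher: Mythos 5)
Your proof is correct, and it takes a genuinely different route from the paper's. The paper's own argument (Appendix, Proposition~A.1) is purely computational: two lemmas establish that homogenization commutes with products and then with substitution of $\nu$-homogeneous polynomials, culminating in the identity $[g(b_0,\ldots,b_k)]_t^\omega = g_t^\nu(b_{0,t}^\omega,\ldots,b_{k,t}^\omega)$ for every $g\in\C[\mathbf z]$; from this the inclusion $I_t^\nu\subseteq\ker\Pi$ is read off, and the reverse inclusion is dispatched in one line by citing flatness (\enquote{dimension and primality are preserved}). You obtain the easy inclusion by the same kind of telescoping expansion, just phrased more compactly through the observation that $t\cdot(-)$ is a ring automorphism of $\C[\mathbf x][t^{\pm1}]$, which is equivalent to the paper's Lemma~A.2 but avoids the grading bookkeeping. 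Where you diverge substantially is the reverse inclusion: instead of appealing to abstract flat-family properties, you carry out the fibrewise-isomorphism-plus-Nakayama argument in full, using the rescaling automorphisms $\sigma_{t_0},\tau_{t_0}$ to identify the fibre map with $\pi$ at every $t_0\neq 0$ and invoking Sturmfels' Theorem~11.4 at $t_0=0$. This has two advantages over the paper's treatment: it spells out exactly the step the paper leaves implicit, and it isolates precisely where the SAGBI hypothesis enters (only in identifying the special fibre), whereas in the paper's proof that dependence is buried inside the final sentence. One small point worth tightening: at $t_0=0$ the fibre of $S_t^\omega$ is $S_t^\omega/(tS_t^\omega)$, which is not \emph{a priori} the subalgebra $\mathrm{in}_\omega(S)\subseteq\C[\mathbf x]$; but composing the fibre map with evaluation at $t=0$ gives the monomial map $z_j\mapsto\mathrm{in}_\omega(b_j)$, whose kernel is exactly $\mathrm{in}_v(I)$ by Sturmfels' criterion, so the fibre map is injective regardless — you should say this explicitly, since it is the only place the argument could seem circular.
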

%\begin{proof}
%    See Appendix \Cref{appendix}
%\end{proof}
As a consequence from \Cref{prop: flatfactorrings} and \Cref{prop: DeformedAlgebrasIsom} we obtain the following result.
\begin{corollary}
The family of subalgebras~$S_t^{\omega}$ is flat over~$\mathbb{C}[t]$ with~$S_1^\omega = S$  and~$S_0^\omega = \mathrm{in}_{\omega}(S)$. The special fibre~$\mathrm{in}_{\omega}(S)$  is the coordinate ring of a toric variety.    
\end{corollary}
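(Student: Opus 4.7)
The plan is to bootstrap directly from the two immediately preceding results: \Cref{prop: DeformedAlgebrasIsom} gives a $\C[t]$-algebra isomorphism $S_t^{\omega} \simeq \C[\mathbf{z},t]/I_t^{v}$, and \Cref{prop: flatfactorrings} asserts that the right-hand side is flat as a $\C[t]$-module. Since flatness is transferred through an isomorphism of $\C[t]$-modules, I would immediately conclude that the family $S_t^{\omega}$ is flat over $\C[t]$.

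For the identification of the two distinguished fibres, I would unwind the definition $b_t^{\omega} := t^{\omega(b)}(t \cdot b)$. Writing $b = \sum_{\alpha} c_\alpha \mathbf{x}^{\alpha}$ and using the torus action $t \cdot \mathbf{x}^{\alpha} = t^{-\omega \cdot \alpha}\mathbf{x}^{\alpha}$, one obtains
\[
b_t^{\omega} \;=\; \sum_{\alpha} c_\alpha \, t^{\omega(b) - \omega \cdot \alpha}\, \mathbf{x}^{\alpha}.
\]
Specializing at $t=1$ makes every power of $t$ disappear, so $b_1^{\omega}=b$ for each $b \in \mathcal{B}$, hence $S_1^{\omega} = \C[b \mid b\in \mathcal{B}] = S$. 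Specializing at $t=0$, the compatibility of $\omega$ with $\succ$ on $\mathcal{B}$ forces $\omega(b) - \omega \cdot \alpha \geq 0$, with equality exactly on the exponent of $\mathrm{in}_{\succ}(b)$; therefore only the initial term survives and $b_0^{\omega} = \mathrm{in}_{\omega}(b)$. Because $\mathcal{B}$ is a SAGBI basis for $S$, the monomials $\{\mathrm{in}_{\omega}(b) : b \in \mathcal{B}\}$ generate $\mathrm{in}_{\omega}(S)$ as a $\C$-algebra, so $S_0^{\omega} = \mathrm{in}_{\omega}(S)$.

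For the last sentence, I would note that $\mathrm{in}_{\omega}(S)$ is a $\C$-subalgebra of $\C[\mathbf{x}]$ generated by monomials, hence is by definition the coordinate ring of an affine toric variety. One can alternatively invoke the Sturmfels theorem recalled just above the corollary: under the identification from \Cref{prop: DeformedAlgebrasIsom}, $\mathrm{in}_{\omega}(S)$ corresponds to $\C[\mathbf{z}]/\mathrm{in}_{v}(I)$, and the SAGBI hypothesis guarantees that $\mathrm{in}_{v}(I)$ is a toric ideal.

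There is no real obstacle here; the statement is essentially a packaging of the two cited propositions with a one-line evaluation of $b_t^{\omega}$ at $t \in \{0,1\}$, which is why it appears as a corollary. The only point that requires some care is checking that the isomorphism $S_t^{\omega} \simeq \C[\mathbf{z},t]/I_t^{v}$ from \Cref{prop: DeformedAlgebrasIsom} is genuinely a $\C[t]$-algebra isomorphism (rather than just pointwise in $t$), so that flatness over $\C[t]$ and the fibre-wise specializations at $t=0,1$ can be read off directly.
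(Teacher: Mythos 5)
Your proof is correct and takes exactly the route the paper intends: the corollary is stated as an immediate consequence of \Cref{prop: flatfactorrings} and \Cref{prop: DeformedAlgebrasIsom}, with the fibres at $t=1$ and $t=0$ identified, as you do, by evaluating the homogenization $b_t^\omega$ and using the SAGBI property to conclude that the monomials $\mathrm{in}_\omega(b)$ generate $\mathrm{in}_\omega(S)$. Your closing caveat is well-placed and is indeed confirmed by the appendix proof of \Cref{prop: deformalgebras}, which establishes $[g(b_0,\dots,b_k)]_t^\omega = g_t^{\nu}(b_{0,t}^\omega,\dots,b_{k,t}^\omega)$ as a polynomial identity in $t$, so the isomorphism is genuinely over $\C[t]$ and flatness transfers.
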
 
\begin{remark}
In this section we assume that the set $\mathcal{B}$ consists of arbitrary polynomials, since the results apply to any affine variety parametrized by $\mathcal{B}$. In what follows, we focus on projective and multiprojective varieties, where as parameterization we consider the parameterization of the corresponding affine cone. For example, in the projective case, the set $\mathcal{B}$ can be thought of as consisting of polynomials of the same degree.

To represent the rational map into projective space, we will use an alternative notation involving an auxiliary variable $s$, representing the map by a tuple $\{s b_0, \dots, s b_k\}$. Similarly, the rational map into the product of projective spaces is represented by
\[
\mathbf{s} \mathcal{B} \coloneqq \bigcup_{r=1}^m s_r \mathcal{B}_r,
\]
where $\mathbf{s} = (s_1, \dots, s_m)$ are auxiliary variables and each $s_r \mathcal{B}_r = \{s_r b_{r,0}, \dots, s_r b_{r,k_r}\}$ corresponds to a component of the product. This set $\mathbf{s} \mathcal{B}$ generates a multigraded polynomial subalgebra, which serves as a general fibre in the toric degeneration we construct.
\end{remark}
\section{SAGBI homotopy}\label{sec:3}

   Let us consider a unirational variety $X$ in a multiprojective space parameterized by $\mathcal B\coloneqq \cup_{r=1}^m \mathcal{B}_r$, where $\mathcal{B}_r=\{ b_{r,0},\ldots ,b_{r,k_r}\}\subseteq \C[\mathbf x]$. More precisely, $X$ is the image closure of the rational map
    \begin{align*}
    \phi\colon \C^n &\dashrightarrow  \mathbf{P}^{k_1, \dots, k_m} = 
    \PP^{k_1}\times\dots\times \PP^{k_m},\\ 
    \mathbf{x}&\mapsto
    \big(\, [b_{1,0}(\mathbf x) : \ldots : b_{1,k_1}(\mathbf x)],\ \dots\ ,\ [b_{m,0}(\mathbf x) : \ldots : b_{m,k_m}(\mathbf x)]\, \big).
\end{align*} 
\vspace{-0.6cm}
\begin{definition} \label{def: base locus}
    The  \emph{base locus} of~$X$ is the following algebraic set:
    \[ \mathbb{B}_X \coloneqq \bigcup_{r=1}^m \mathcal{V}(\mathcal{B}_r).\] 
\end{definition}
\begin{remark}
The subvariety  on which the map $\phi$ is not defined might be strictly contained in  the base locus $\mathbb{B}_X$, as introduced above.
It is obtained by intersecting the base loci for all possible representatives of $\phi$. For our purposes, it is necessary to consider $\mathbb{B}_X$.
\end{remark}
    The multigraded coordinate ring of $X$ is generated by polynomials from $\mathbf{s}\mathcal{B}\coloneqq\cup_{r=1}^m s_r\mathcal{B}_r$, where $\mathbf s = (s_1,\dots,s_m)$ are auxiliary variables  and $s_r\mathcal{B}_r=\{ s_rb_{r,0},\dots,s_rb_{r,k_r}\}$:
    \[ \C[X] := \C[s_r\cdot b_{r,j}(\mathbf x): 1\leq r\leq     m,\ 0\leq j\leq k_r] \subseteq \C[\mathbf s , \mathbf x].\]
    The multigrading is given by considering the degree with respect to $s_1,\dots,s_m$: $\deg(s_i)=e_i\in \Z^m$.

\bigskip
 
Suppose that the set of polynomials $\mathbf{s}\mathcal{B}$ forms a SAGBI basis for the coordinate ring of $X$ with respect to a term order $\succ$. We extend it to a degree compatible term order on $\C[\mathbf s,\mathbf{x}]$ and represent it by a weight vector $\widehat\omega= (-1,\dots,-1,\, \omega)\in \Z^{n+m}$, assigning weight $-1$ to $s_1,\dots,s_m$ and $\omega_i$ to each variable $x_i$. From the previous section we have that~$X$ admits a flat degeneration to a toric variety~$X_0$ whose coordinate ring is $\C[X_0]= \mathrm{in}_{\widehat\omega}(\C[X])$. 
%Explicitely, $X_0$ is the unirational variety given by the following monomial map:
%  \[ \phi_0: \C^n \dashrightarrow \PP^k, \quad \mathbf{x}\mapsto [\mathrm{in}_{\omega}b_0(\mathbf{x}):\dots:\mathrm{in}_{\omega}b_k(\mathbf{x})]. \]
    Explicitly, $X_0$ is  the image closure  of the following monomial map:
\begin{align*}
    \phi_0 \colon \C^n &\dashrightarrow \PP^{k_1}\times\dots\times \PP^{k_m},\\ 
    \mathbf{x} &\mapsto
    \big(\,[\mathrm{in}_{\omega}(b_{1,0}(\mathbf x)):\ldots:\mathrm{in}_{\omega}(b_{1,k_1}(\mathbf x))],\ \ldots\ ,\ [\mathrm{in}_{\omega}(b_{m,0}(\mathbf x)):\ldots:\mathrm{in}_{\omega}(b_{m,k_m}(\mathbf x))]\,\big).
\end{align*} 
    We consider a square system $ \mathcal{F}:= \{ f_1=\cdots= f_n=0\}$ with equations given by linear forms in the parameterization $\mathcal{B}$. In particular, each equation is a linear combination of a specific set $\mathcal B_r$:
 % \begin{equation}   \label{system}
 %    \mathcal{F}:= \{ f_1=\cdots= f_n=0\}, \quad f_i= \sum\limits_{j=0}^k c_{ij}b_j =0.
 % \end{equation} 
 \begin{equation}\label{linearcombination}
    f_i = \sum\limits_{j=0}^{k_r} c_{ij}\, b_{r,j},  \quad \text{for some } 1\leq r\leq m. 
\end{equation}
    %We denote by $C=(c_{ij})\in\C^{n\times \ell}$ the matrix of the coefficients in \eqref{linearcombination}, with $\ell= m+\sum_r k_r$. 
    Systems of the form~\eqref{linearcombination} are called \emph{horizontally parameterized}. When all the polynomials from $\mathcal{B}$ are monomials, the system~\eqref{linearcombination} becomes sparse. For more details on sparse systems we refer to \cite{polyhedralhomotopy}. Analogously to the case of sparse systems, we say that the system $\mathcal F$ is \textit{unmixed} when~$m=1$, \textit{fully mixed} when~$m=n$, and \textit{semimixed of type $(k_1,\dots,k_m)$} for arbitrary~$m$.

 \begin{definition}\label{sagbihomotopyDef}
     The \emph{SAGBI homotopy} is the system $\mathcal{F}_t^\omega$ obtained by replacing every $b_{r,j}$ in \eqref{linearcombination} with its homogenization  $(b_{r,j})_t^\omega$ with respect to the weight vector $\omega$:
        \begin{equation} \label{eq: sagbi system}
            \mathcal{F}_t^\omega= \{ f_{1,t}^\omega=\cdots= f_{n,t}^\omega=0\}, \quad f_{i,t}^\omega= \sum\limits_{j=0}^k c_{ij}(b_{r,j})_t^\omega =0.
        \end{equation}
 \end{definition}
     We observe that we recover $\mathcal{F}=\mathcal{F}_1^\omega$ from the SAGBI homotopy by setting the parameter $t=1$. For $t=0$, we get a semimixed sparse system $\mathcal{F}_0^\omega$ defined on the toric variety~$X_0$ via linear equations in the initial terms of $b_{r,j}$:
     \begin{equation} \label{eq: sparsesys}
            \mathcal{F}_0^\omega= \{ f_{1,0}^\omega=\cdots= f_{n,0}^\omega=0\}, \quad f_{i,0}^\omega= \sum\limits_{j=0}^k c_{ij} \cdot \mathrm{in}_{\succ}(b_{r,j}) = 0.
        \end{equation}

\begin{examplex} \label{ex: homogenization sagbi homotopy system}
    We consider a horizontally parameterized system $\mathcal{F}$ given  by $\mathcal{B} = \mathcal{B}_1 \cup \mathcal{B}_2$, where
    \[\mathcal{B}_1=\{x,y, x^2 + y^2,1 \}, \quad \mathcal{B}_2=\{y, z, x^2 + y^2, x^3 + z^3 \}\]
    and $\mathcal{F} =\{f_1=f_2=0\}$ consists of linear combinations of polynomials from $\mathcal{B}_1$ and $\mathcal{B}_2$, respectively.
    %\[ \phi:\C^3\dashrightarrow \PP^3\times \PP^3, \, (x,y,z)\mapsto \big ( [x:y:x^2 + y^2:1], \, [y: z: x^2 + y^2: x^3 + z^3] \big).\]
    The set $s_1\mathcal{B}_1\cup s_2\mathcal{B}_2$ is a SAGBI basis with respect to the weight $\widehat\omega = (-1,-1,-2,-3)$. 
    %We compute a weight $\omega$ for which the set $s_1\mathcal{B}_1\cup s_2\mathcal{B}_2$ is a SAGBI basis using the function \texttt{get\char`_weight}. The degree of the map $\phi$ above and the degree of the corresponding monomial map $\phi_0$ are both one. We compute them with the functions \texttt{degree\char`_map} and \texttt{degree\char`_monomial\char`_map} respectively.
% \begin{minted}{julia}
% @var x, y, z
% sagbi = [[x, y, (x^2 + y^2), 1], [y, z, (x^2 + y^2), (x^3 + z^3)]]
% w = get_weight(sagbi) #(-1,-2,-3)
% degree_map(sagbi) #1
% degree_monomial_map(sagbi, w) #1
% \end{minted}
    %We notice that we keep track of the different sets $\mathcal{B}_1$ and $\mathcal{B}_2$ by defining \texttt{sagbi} as a vector of vectors. 
    We compute the homogenization of $\mathcal{B}_1$ and $\mathcal{B}_2$ with respect to the weight $\omega = (-2,-3)$:
\[ (\mathcal{B}_1)_t^\omega = \{ x, \, y,\, x^2 + t^2 y^2,\,  1 \}, \quad (\mathcal{B}_2)_t^\omega = \{ y,\, z,\, x^2 + t^2y^2,\, x^3 + t^6 z^3 \}.\]
  The functions that perform this computation are presented in \Cref{ex: showing functions}.  For instance, the equation corresponding to $f_2$ in the SAGBI homotopy is computed as:
    \[ 
    %f_1 = 2\cdot y + 3\cdot z + 5\cdot (x^2 + y^2) + 7\cdot  (x^3 + z^3), \quad 
    f_{2,t}^\omega =  c_{20}\cdot y + c_{21}\cdot z + c_{22}\cdot (x^2 + t^2y^2) + c_{23}\cdot  (x^3 + t^6z^3). \qedhere \]
% To get a zero-dimensional system, we define $3$ linear equations divided in two sets. The first one consists of two equations in the same set of variables \texttt{p[1:4]}, representing $\mathcal{B}_1$, and the second consists of a single equation in another set of variables \texttt{q[1:4]}, corresponding to $\mathcal{B}_2$.
% \begin{minted}{julia}
% @var p[1:4]
% @var q[1:4]
% lin_sys = [ rand(Int64,2,4)*p , rand(Int64,1,4)*q]
% sagbi_homotopy(lin_sys, sagbi; weight = w)  #6 solutions
% \end{minted}
% The main function \texttt{sagbi\char`_homotopy} takes as input the linear equations and the corresponding SAGBI basis. In this example the optional input \texttt{weight} is specified. 
\end{examplex}
The solutions of the sparse system $\mathcal{F}_0^\omega$ from \eqref{eq: sparsesys} can be optimally computed via the polyhedral homotopy, as constructed in~\cite{polyhedralhomotopy}. Thus, we obtain a family of systems parameterized by $t\in \C$ that we are able to solve for $t=0$, which we consider as a start parameter in the homotopy continuation method. To deform the start parameter to the target parameter $t=1$, we consider a continuous random path $h\colon [0,1]\rightarrow \C$ such that $h(0)=0$, $h(1)=1$ and we consider the homotopy given by
 $H(\mathbf{x},t) := \mathcal{F}_{h(t)}^\omega$.
By tracking each solution of the start system $H(\mathbf{x},0) = \mathcal{F}_0^{\omega}$ along the solution path we obtain all solutions to the target system $H(\mathbf{x},1) = \mathcal{F}$. 

\medskip

We summarize the previous discussion in \Cref{alg: two-step sagbi algorithm}, that is the main result of the paper.  The first step is conducted via the SAGBI detection algorithm \cite[Algorithm 2]{sagbidetection}.
\bigskip

\begin{algorithm}[H]
\caption{Two-step SAGBI Homotopy Algorithm} \label{alg: two-step sagbi algorithm}
\SetAlgoLined
\KwIn{Linear equations~$\mathcal{F} = \{f_1,\ldots,f_n\}$ of the form \eqref{linearcombination} and finite sets $\mathcal{B}_r, \; 1\leq r \leq m$.}
\KwOut{Isolated complex solutions to the system~$\mathcal{F}$.}
1. Detect a weight $\omega$, for which polynomials in $\cup_{r=1}^m s_r\mathcal{B}_r$ form a SAGBI basis\;

2. Compute~$\mathcal{F}_t^{\omega}$  by substituting~$(b_{r,j})_t^{\omega}$ to $b_{r,j}$ in the expression for~$f_i$\;

3. Solve the sparse system~$\mathcal{F}_0$ via the polyhedral homotopy \cite{polyhedralhomotopy}\;

4. Track these solutions to the solutions of~$\mathcal{F}$ at~$t = 1$.
\end{algorithm}

\subsection{Optimality}
 We assume that the SAGBI homotopy is a general square system, i.e., the number of equations is equal to the number of variables~$n$ and the coefficients~$c_{ij}$ of the system are general. Since the number of solutions depends on the degree of the parametrization map $\phi$, we also suppose that this is finite and equal to the degree of the monomial map $\phi_0$. We recall the definition for completeness.
 \vspace{-0.5cm}
\begin{definition}
    Let $X$ be the  image closure of the rational map $\phi:\C^n\dashrightarrow  \mathbf{P}^{k_1, \dots, k_m}$. 
    If there exists an open subset $U\subseteq X$ such that the fibre $\phi^{-1}(x)$ consists of exactly $d$ points for every $x\in U$, we say that the map $\phi$ is \emph{finite of degree} $d$:
    \[\deg(\phi)=\mid{\phi^{-1}(x)}\mid=d \text{ for generic } x\in X.\]
\end{definition}
\begin{remark}
    The rational map between two irreducible varieties $\phi: V \to W$ is generically finite if and only if the field extension $[\mathbb{C}(V):\mathbb{C}(W)]$ of their function fields is finite. The degree $\deg \phi$ is then the degree of this field extension. 
\end{remark}
The following is the key result to show that \Cref{alg: two-step sagbi algorithm} is correct. It follows from the flat degeneration properties, explicitly from \Cref{lem: kush} and \Cref{lem: bern}. SAGBI homotopy algorithm is implied by so called \emph{generalized BKK Theorem} \cite[Theorem 2.5]{oscillators} in the same way the polyhedral homotopy algorithm follows from the usual BKK Theorem, as constructed in \cite{polyhedralhomotopy}.
\begin{proposition} \label{prop: number sols Sagbi homotopy}
     If $\deg(\phi)=\deg(\phi_0)$, then the SAGBI homotopy system has the same number of solutions outside of base locus for every $t\in \C$, that is equal to  $\deg(\phi) \cdot {\rm deg}(X)=\deg(\phi_0)\cdot {\rm deg}(X_0)$. 
\end{proposition}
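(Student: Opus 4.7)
The plan is to reinterpret the solutions of $\mathcal{F}_t^\omega$ outside the base locus as preimages, under the parameterization $\phi_t$, of a linear section of the fiber $X_t$, and then apply \Cref{lem: bern} fiber-by-fiber to conclude that the count is constant in $t$.

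First I would fix the combinatorial data: say $i_r$ of the equations of \eqref{linearcombination} are combinations of polynomials from $\mathcal{B}_r$, so that $i_1+\cdots+i_m=n=\dim X$. For each $r$, the coefficient vectors $(c_{ij})_{j=0,\ldots,k_r}$ of the equations coming from $\mathcal{B}_r$ define $i_r$ linear forms on $\PP^{k_r}$, whose common zero locus is a linear subspace $L_r\subseteq\PP^{k_r}$ of codimension $i_r$. Genericity of the coefficients $c_{ij}$ makes the product
\[
\mathbf{L}:=L_1\times\cdots\times L_m\subseteq\mathbf{P}^{k_1,\ldots,k_m}
\]
a \emph{general} product of linear subspaces of the type required in \Cref{lem: bern}. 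The key observation is that $\mathbf{L}$ is completely independent of the deformation parameter $t$: only the components of the parameterization get homogenized in \Cref{def: base locus}, not the coefficients $c_{ij}$.

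Next I would relate the solutions to a linear slice. Let $\phi_t$ denote the rational map $\C^n\dashrightarrow\mathbf{P}^{k_1,\ldots,k_m}$ whose components are the homogenized tuples $((b_{r,j})_t^\omega)_j$ for each $r$, so that $\phi_1=\phi$ and $\phi_0$ is the monomial map into $X_0$. The image closure of $\phi_t$ is $X_t$. A point $\mathbf{x}\in\C^n\setminus\mathbb{B}_X$ is a solution of $\mathcal{F}_t^\omega$ if and only if each group of equations vanishes, which by construction of $L_r$ is equivalent to $\phi_t(\mathbf{x})\in\mathbf{L}$. Hence, outside the base locus, the solution set of $\mathcal{F}_t^\omega$ is the preimage $\phi_t^{-1}(X_t\cap\mathbf{L})$, and since the $c_{ij}$ are general, $X_t\cap\mathbf{L}$ is transverse and lies in the locus where $\phi_t$ has generic fibre cardinality $\deg(\phi_t)$.

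From here the count is straightforward. By the generalized BKK result (\Cref{lem: bern} applied to the flat family $\mathcal{X}\to\C^1$), the cardinality $|X_t\cap\mathbf{L}|$ equals the multidegree $\mathbf{e}(i_1,\ldots,i_m)(X_t)$, which is constant in $t$. Since $\phi_t$ is generically finite of degree $\deg(\phi_t)$, the solution count outside the base locus is $\deg(\phi_t)\cdot\mathbf{e}(i_1,\ldots,i_m)(X_t)$. For $t\neq 0$ the map $\phi_t$ is obtained from $\phi$ by the $\C^\times$-action underlying \eqref{weight_degeneration}, so $\deg(\phi_t)=\deg(\phi)$ is constant there. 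At $t=0$ we invoke the standing hypothesis $\deg(\phi)=\deg(\phi_0)$, giving the same value. Combining this with constancy of the multidegree yields the stated equality
\[
\deg(\phi)\cdot\deg(X)=\deg(\phi_0)\cdot\deg(X_0).
\]

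The main subtleties I would expect are (a) carefully justifying that $\deg(\phi_t)$ really is constant across the family, for which the cleanest route is the torus-action viewpoint of \Cref{prop: DeformedAlgebrasIsom} together with the hypothesis at $t=0$; and (b) checking that no solutions are lost to, or gained from, the base locus $\mathbb{B}_X$ under the deformation, which follows from genericity of $\mathbf{L}$ and the fact that $\mathbb{B}_X$ is cut out independently of $t$, so that the linear section meets the regular locus of $\phi_t$ transversely for every $t\in\C$.
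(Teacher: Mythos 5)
Your argument follows the route the paper indicates for this result: identify the solutions of $\mathcal{F}_t^\omega$ outside the base locus with $\phi_t^{-1}(X_t\cap\mathbf{L})$ for a $t$-independent product of general linear subspaces $\mathbf{L}$, invoke \Cref{lem: bern} for constancy of $|X_t\cap\mathbf{L}|=\mathbf{e}(i_1,\dots,i_m)(X_t)$ along the flat family, and use the hypothesis $\deg(\phi)=\deg(\phi_0)$ to close the gap at $t=0$. The paper itself only cites \Cref{lem: kush}, \Cref{lem: bern}, and the generalized BKK theorem without writing the proof out, so your proposal fills in exactly the expected details. One small inaccuracy to fix in your subtlety (b): the base locus is \emph{not} cut out independently of $t$, since each $(\mathcal{B}_r)_t^\omega$ is genuinely $t$-dependent and degenerates at $t=0$ to the monomial base locus $\bigcup_r\mathcal{V}(\mathrm{in}_\omega(\mathcal{B}_r))$; what you actually need is that for $t\neq0$ the base loci of $\phi_t$ are all images of $\mathbb{B}_X$ under the $\C^\times$-rescaling of \eqref{weight_degeneration_algebra}, and that genericity of $\mathbf{L}$ alone ensures the linear slice avoids the image of the (possibly larger) base locus at $t=0$.
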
%\todo{cite the result}
\begin{corollary}
    If $\deg(\phi)=\deg(\phi_0)$, then the SAGBI homotopy is optimal for the family of general square horizontally parameterized systems of the form~\eqref{linearcombination}.
\end{corollary}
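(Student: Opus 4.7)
The plan is to identify the solutions of $\mathcal{F}_t^\omega$ outside the base locus with the preimages under $\phi_t$ of a fixed product of linear subspaces in $\mathbf{P}^{k_1,\dots,k_m}$, and then invoke \Cref{lem: bern} applied to the flat family provided by \Cref{prop: DeformedAlgebrasIsom}.

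First, let $i_r$ be the number of equations $f_i$ that are linear combinations of polynomials in $\mathcal{B}_r$, so that $i_1 + \cdots + i_m = n = \dim X$. The equation $f_{i,t}^\omega = \sum_j c_{ij}\, (b_{r,j})_t^\omega$ is precisely the pullback along $\phi_t$ of the hyperplane $\sum_j c_{ij} z_{r,j} = 0$ in $\PP^{k_r}$ (the coefficients $c_{ij}$ are independent of $t$, and $\phi_t$ is defined by the homogenizations $(b_{r,j})_t^\omega$). Grouping equations by $r$, the common vanishing of $\mathcal{F}_t^\omega$ pulls back $\mathbf{L} = L_1 \times \cdots \times L_m$, where $L_r \subseteq \PP^{k_r}$ is the intersection of $i_r$ hyperplanes in $\PP^{k_r}$. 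For generic $c_{ij}$ each $L_r$ has codimension exactly $i_r$, so $\mathbf{L}$ is a product of general linear subspaces of the codimensions required by \Cref{lem: bern}.

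Next, by \Cref{prop: DeformedAlgebrasIsom} and the corollary that follows it, $\{S_t^\omega\}$ is a flat family over $\C[t]$ whose multiprojective spectra $X_t$ coincide with $X$ for $t \neq 0$ and with $X_0$ for $t = 0$. Applying \Cref{lem: bern} to this flat family and the fixed $\mathbf{L}$, the intersection $X_t \cap \mathbf{L}$ consists of $\mathbf{e}(i_1,\dots,i_m)(X_t)$ points for every $t$, and this multidegree is constant in the flat family, so in particular $\mathbf{e}(i_1,\dots,i_m)(X) = \mathbf{e}(i_1,\dots,i_m)(X_0)$. Now, each point of $X_t \cap \mathbf{L}$ has exactly $\deg(\phi_t)$ preimages under $\phi_t$, for the following reason: by genericity of $\mathbf{L}$, we may assume it meets $X_t$ only in the locus over which $\phi_t$ realizes its generic fibre size, and it avoids the image in $X_t$ of the base locus (so that no spurious preimages arise). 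These preimages are precisely the solutions of $\mathcal{F}_t^\omega$ away from $\mathbb{B}_{X_t}$. Hence the count is $\deg(\phi_t) \cdot \mathbf{e}(i_1,\dots,i_m)(X_t)$, which for $t \neq 0$ equals $\deg(\phi) \cdot \deg(X)$ and at $t=0$ equals $\deg(\phi_0) \cdot \deg(X_0)$; the hypothesis $\deg(\phi) = \deg(\phi_0)$ identifies the two, yielding a constant count for all $t \in \C$.

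The main obstacle will be the simultaneous genericity argument: the coefficients $c_{ij}$ are fixed, so one needs to argue that for \emph{generic} $c_{ij}$ the linear section $\mathbf{L}$ is transverse to every fibre $X_t$ away from the base locus and from the locus where $\phi_t$ fails to have its generic fibre cardinality, and that this holds uniformly outside a finite set of exceptional $t$'s. This is standard using that the family $\{X_t\}$ and the map $\phi_t$ vary algebraically in $t$, so the conditions defining ``good'' linear sections are Zariski open in the coefficient space and the union of finitely many such conditions remains Zariski dense; but it is the step that is least automatic from the previously stated results. Once this is in place, everything else is a direct application of \Cref{lem: bern} and the definition of $\deg(\phi_t)$.
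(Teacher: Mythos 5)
Your argument is essentially correct and follows the same route the paper intends: invoke flatness (via \Cref{prop: DeformedAlgebrasIsom} and its corollary), pull back a generic product of linear subspaces, apply \Cref{lem: bern} to get a constant multidegree, and multiply by the common map degree. In the paper, though, the corollary is stated as an \emph{immediate} consequence of \Cref{prop: number sols Sagbi homotopy}: that proposition asserts the constant count $\deg(\phi_t)\cdot\mathbf{e}(i_1,\dots,i_m)(X_t)$ for all $t$, and optimality then follows in one line from the introduction's definition (the start system at $t=0$ and the target at $t=1$ have the same number of isolated zeros, so exactly that many paths are tracked). What you have written is, in effect, a proof of \Cref{prop: number sols Sagbi homotopy} itself — more work than the corollary requires, though self-contained. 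You also never explicitly close the loop to ``optimal''; after establishing the constant count you should add the sentence identifying the start-system solution count with the target-system solution count and invoking the definition of optimality.

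Two smaller remarks. First, your flagged genericity concern is real, but it is precisely what the paper delegates to the generalized BKK theorem \cite[Theorem~2.5]{oscillators}; a self-contained treatment would need the uniform-transversality argument you sketch, which is why the paper cites that reference rather than proving it inline. Second, the phrase ``avoids the image in $X_t$ of the base locus'' is a little off — the base locus $\mathbb{B}_X$ sits in the source $\C^n$ where $\phi_t$ is undefined, so it has no image; the correct genericity condition is that $\mathbf{L}$ meets $X_t$ only in points with the generic fibre cardinality and avoids the locus where the closure of the fibre meets the indeterminacy set. Apart from that loose phrasing, the counting argument is sound, and you correctly use the multidegree $\mathbf{e}(i_1,\dots,i_m)(X_t)$ where the paper's Proposition writes $\deg(X)$.
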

    The assumption $\deg(\phi)=\deg(\phi_0)$ is necessary, since both the cases $\deg(\phi)<\deg(\phi_0)$ and $\deg(\phi)>\deg(\phi_0)$ may occur as showed in the examples below. In the first case the SAGBI homotopy is not optimal, but still finds all the solutions. In the second case, SAGBI homotopy will miss exactly ${\rm deg}(X)(\deg(\phi)-\deg(\phi_0))$ many solutions.

\medskip

    In the examples below, we continue to use parameterizing sets given by SAGBI bases for illustrative purposes, in order to highlight the  challenges for our algorithm. In general, the degrees of the maps $\phi$ and $\phi_0$ do not depend on the presence of SAGBI basis. However, we conjecture that the equality $\deg(\phi)=\deg(\phi_0)$ holds whenever the rational map $\phi$ can be represented by a homogeneous Gröbner basis with respect to some term order.
    
    % However, we conjecture the following result.
    % \begin{conjecture}
    %     when GB then $\deg(\phi)=\deg(\phi_0)$.
    % \end{conjecture}
\begin{examplex}
We consider a parameterizing set in three variables $\mathcal{B} = \{x^3,\,y^3,\,x^6y^3+xz^2+y^3\}$. The set $s\mathcal{B}$ is a SAGBI basis with respect to $\widehat\omega = (-1,\, \omega)$, where $\omega=(-3,-3,-8)$. However, the degree of the parameterization induced by $\mathcal{B}$ is $\deg(\phi)=3$ and the corresponding monomial map has degree $\deg(\phi_0)=18$. 
\end{examplex}
The next example appears in the study of chemical reaction networks \cite[Example 5.3]{OBATAKE2024102672}.
\begin{examplex} \label{ex: base locus} 
    The map $\phi$ defining the unirational variety $X\subseteq \PP^3$ of degree 2 parameterized by  $$\mathcal B=\{x(x^2 + y^2 - 2x),\  x(5-4y), \,y(x^2 + y^2 - 2x), \,y(5-4y)\}$$ 
    has degree $2$. This parameterization induces a SAGBI basis $s\mathcal{B}$
with respect to $\widehat\omega=(-1,-2,-1)$. The monomial map has degree $1=\deg(\phi_0)<\deg(\phi) =  2$. The SAGBI homotopy algorithm returns $1\cdot 2=2 $ solutions.
Moreover, the map $\phi$ has a non-empty zero-dimensional base locus. Indeed, there are three common solutions of the polynomials from $\mathcal{B}$: 
    \[ \mathbb B_X = \{ (0,0), \,( 1 - \dfrac{3}{4}i, \dfrac{5}{4}), \,(1+ \dfrac{3}{4}i, \dfrac{5}{4}) \}, \]
which are also missed by our homotopy. In total, a general linear system in coordinates on $X$ has $7$ solutions with four solutions appearing as pull-backs of the points on $X$ and three solutions listed in the base locus above. All the computations will be given in \Cref{ex: base locus code}.
\end{examplex}
\begin{remark}
    Only zero-dimensional components of the base locus lead to missing isolated solutions. Positive-dimensional solution sets are  not tracked by the standard  \texttt{HomotopyContinuation.solve} function either. Notice that, since the polyhedral homotopy computes solutions in the torus, $(0,0)$ will be not in the output of \texttt{HomotopyContinuation.solve} function as well.
\end{remark}    
\subsection{Optimization}
    We recall that the polyhedral homotopy \cite{polyhedralhomotopy} constructs algebraic deformations from \emph{mixed subdivisions} of a polytope, which is defined by the leading terms $\mathrm{in}_{\omega}(b_{r, j})$ from $\cup_{r=1}^m \mathrm{in}_{\omega}(\mathcal{B}_r)$. For each \emph{cell} of this subdivision one extracts a weight $\gamma$. Then the polyhedral homotopy replaces $\mathcal{F}_0$ with its homogenization $\mathcal{F}_{0, t}^{\gamma}$ with respect to $\gamma$. For $t=0$, the system $\mathcal{F}_{0, 0}^{\gamma}$ can be solved with the \emph{Smith Normal Form procedure}. We can define a homotopy directly from the system $\mathcal{F}$ to $\mathcal{F}_{0,0}$. More precisely, for every equation $f_i = \sum c_{ij} b_{r,j}$ with $1 \leq i \leq n$ we have
% \begin{equation} \label{one-stepHomotopy}
%     (f_{i})_t^{\omega, \gamma} := t^{\gamma \cdot \alpha}\sum_{j} c_{ij} t^{-\gamma \cdot \alpha_j}\left(\mathbf{x}^{\alpha_j} + \sum_{q} t^{\omega \cdot \alpha_j - \omega \cdot \beta_{jq}} \mathbf{x}^{\beta_{jq}}\right), \quad \alpha:= \max_{j} \alpha_j.
% \end{equation}
\begin{equation} \label{one-stepHomotopy}
    (f_i)_t^{\omega, \gamma} := t^{\gamma \cdot \alpha_r}\sum_{j} c_{ij} t^{-\gamma \cdot \alpha_{rj}}\left(\mathbf{x}^{\alpha_{rj}} + \sum_{q} t^{\omega \cdot \alpha_{rj} - \omega \cdot \beta_{jq}} \mathbf{x}^{\beta_{jq}}\right), \quad \alpha_r:= \argmax_{j} \langle \gamma, \alpha_{rj}\rangle.
\end{equation}
% Here $\mathbf{x}^{\alpha_j} = \mathrm{in}_{\omega}(b_j)$ and $\left(\mathbf{x}^{\alpha_j} + \sum_{q} t^{\omega \cdot \alpha_j - \omega \cdot \beta_{jq}} \mathbf{x}^{\beta_{jq}}\right) = b_{j,t}^{\omega}$.
Here $\mathbf{x}^{\alpha_{rj}} = \mathrm{in}_{\omega}(b_{r,j})$ and $\left(\mathbf{x}^{\alpha_{rj}} + \sum_{q} t^{\omega \cdot \alpha_{rj} - \omega \cdot \beta_{jq}} \mathbf{x}^{\beta_{jq}}\right) = (b_{r,j})_t^{\omega}$. With this simplification, the SAGBI homotopy algorithm takes the following form. We refer to it as the \emph{one-step} algorithm, as the homotopy is constructed only once.
\bigskip

 \begin{algorithm}[H] \label{alg: One-step SAGBI Homotopy Algorithm}
\caption{One-step SAGBI Homotopy Algorithm}
\SetAlgoLined
\KwIn{Linear equations~$\mathcal{F} = \{f_1,\ldots,f_n\}$ of the form \eqref{linearcombination} and finite sets $\mathcal{B}_r, \; 1\leq r \leq m$.}
\KwOut{Isolated complex solutions to the system~$\mathcal{F}$.}
1. Detect a weight $\omega$, for which polynomials from $\cup_{r=1}^m s_r\mathcal{B}_r$ form a SAGBI basis\;

2. Construct a  mixed subdivision $\mathcal{S}$ of $(\mathrm{in}_{\omega}(\mathcal{B}_1), \ldots, \mathrm{in}_{\omega}(\mathcal{B}_m))$ as in \cite{polyhedralhomotopy}\;

3. For each cell $C_{\gamma}$ of $\mathcal{S}$ compute $\mathcal{F}_{t}^{\omega, \gamma}$ applying \eqref{one-stepHomotopy} for each~$f_i$\;

4. Solve the system~$\mathcal{F}_{0,0} = \mathcal{F}_0^{\omega, \gamma}$ using the Smith Normal Form procedure\;

5. Track these solutions to the solutions of~$\mathcal{F}$ at~$t = 1$.
\end{algorithm}
\section{Functionality}\label{sec:4}

% \begin{enumerate}
%     \item one projective space small 
%     \item Multi-proj small e.g. for (2,1)
%     \item Vanishing locus
%      \item  Map of degree > 1 ok
%     \item Special linear section (VaryLinearPart = true)
% \end{enumerate}

We illustrate how to use the functions of our package with a series of examples. The main function \texttt{sagbi\char`_homotopy} takes as input a linear system whose equations are separated into $m$ blocks that do not share variables: %not necessarily given in the same set of variables:
\[F = \{f_1 = \cdots = f_n = 0\}, \quad f_i = \sum_{j=0}^{k_r} c_{ij} z_{r,j},  \quad \text{for some } 1\leq r\leq m \]
and a list of polynomials $(b_{r,j})$ in $n$ variables for $1\leq j \leq k_r$ and $1\leq r \leq m$, where $m$ and $k_1,\ldots, k_m$ are fixed.
Optionally it takes also a weight $\omega \in \Z^n$, otherwise such a weight will be computed applying the SAGBI detection algorithm \cite[Algorithm 2]{sagbidetection}. 
\begin{remark}
    In the original implementation of \cite{sagbidetection}, the SAGBI detection algorithm returns all weight vectors for which a given set of polynomials forms a SAGBI basis. To improve efficiency, our implementation stops the search as soon as the first valid weight is identified. This behavior is implemented in the \texttt{SagbiDetection.jl} file of our package. 

Furthermore, since we work exclusively with homogeneous algebras, the SAGBI criterion is greatly simplified. The function \texttt{weightVectorsRealizingSAGBI}, named after its counterpart in \cite{sagbidetection}, computes the weight vector $\widehat\omega$. The \texttt{get\char`_weight} function then outputs the corresponding weight $\omega$, represented in \Cref{sec:3}.
\end{remark}
The output of \texttt{sagbi\char`_homotopy} consists of computed isolated zeros of the polynomial horizontally parameterized square system $\mathcal{F}$, obtained by substituting each $b_{r,j}$ to the corresponding~$z_{r,j}$. %When using the \texttt{SagbiHomotopy.jl} package for the first time, the user has to install it via the standard \texttt{Julia} commands. 
We illustrate the main function on both unmixed and semimixed cases.
\begin{examplex}[%Projective case: 
$n=3,\, m=1,\, k_1=3$] \label{ex: one proj space, degree equal}
    Consider the projective space $\PP^3$ as the image closure of%variety parameterized by
    \[ \phi: \C^3 \rightarrow \PP^3, \quad (x,y,z)\mapsto [x^2+1 \,:\, y^2+1 \,:\, xy + z^2  \,:\, 1 ].  \]
    We compute a suitable weight vector $\omega$ such that $s\cdot \{x^2+1,\, y^2+1,\, xy + z^2,\, 1\}$ is SAGBI basis with respect to $(-1,\, \omega)$ using the function \texttt{get\char`_weight}.
\begin{minted}{julia}
using Pkg; Pkg.add("SagbiHomotopy")
using SagbiHomotopy, HomotopyContinuation
@var x,y,z
sagbi = [x^2+1, y^2+1, x*y + z^2, 1]
w = get_weight(sagbi)   #(-2,-2,-3)
\end{minted}
    The functions \texttt{degree\char`_map} and \texttt{degree\char`_monomial\char`_map} compute, respectively, the degree of the parameterization $\phi$ and the degree of the monomial dominant map into $\PP^3$:
    \[  \phi_0: \C^3 \rightarrow \PP^3, \quad (x,y,z)\mapsto [x^2 \,:\, y^2 \,:\, z^2  \,:\, 1 ]. \]
\begin{minted}{julia}
degree_map(sagbi) #8
degree_monomial_map(sagbi, w) #8
\end{minted}
    To define a zero-dimensional linear system on $\PP^3$, we introduce a set of new \texttt{HomotopyContinuation} variables, one for each polynomial in the parameterization, and define \texttt{lin\char`_sys} as $3$ linear equations in the new  variables. We apply our main function \texttt{sagbi\char`_homotopy} to solve the equations on $\PP^3$, providing the computed weight vector $\omega$ as an optional input. 
    The maps $\phi$ and $\phi_0$ both have degree~$8$, which \texttt{sagbi\char`_homotopy} function verifies by default, thus the condition in \Cref{prop: number sols Sagbi homotopy} is satisfied and the SAGBI homotopy computes all the $8\cdot 1$ expected solutions of the system. In particular, in this case the base locus is obviously empty as the polynomial $1$ is in the parameterization. 
\begin{minted}{julia}
@var p[1:4]
lin_sys = [ rand(-100:100,3,4)*p ]
sagbi_homotopy(lin_sys, sagbi; weight = w, degreeCheck = true)  #8 solutions
\end{minted}
\end{examplex}
\begin{examplex}[%Multiprojective case: 
$n = 3,\,  m=2, \, k_1=3,\, k_2=3$]  \label{ex: showing functions}
    We consider a semimixed system of type $(1,2)$ of linear equations on the multiprojective variety defined in  \Cref{ex: homogenization sagbi homotopy system}. The  parameterization map $\phi$ and the corresponding monomial map $\phi_0$ have both degree one. 
\begin{minted}{julia}
@var x, y, z
sagbi = [[x, y, (x^2 + y^2), 1], [y, z, (x^2 + y^2), (x^3 + z^3)]]
w = get_weight(sagbi)   #(-1,-2,-3)
degree_map(sagbi) #1
degree_monomial_map(sagbi, w) #1
\end{minted}
    We keep track of the different sets $\mathcal{B}_1$ and $\mathcal{B}_2$ by defining \texttt{sagbi} as a vector of vectors. 
    To get a zero-dimensional system, we define $3$ linear equations subdivided into two sets. The first  consists of two equations in the same set of variables \texttt{p[1:4]}, one for each polynomial in $\mathcal{B}_1$, and the second set consists of a single equation in other variables \texttt{q[1:4]}, corresponding to the four polynomials in $\mathcal{B}_2$. We use random integers in the interval $[-100,100]$ to define the equations.
\begin{minted}{julia}
@var p[1:4]
@var q[1:4]
lin_sys = [ rand(-100:100,2,4)*p , rand(-100:100,1,4)*q]
sagbi_homotopy(lin_sys, sagbi; weight = w)  #6 solutions
\end{minted}
%    The main function \texttt{sagbi\char`_homotopy} takes as input the linear equations and the corresponding SAGBI basis. In this example the optional input \texttt{weight} is specified. 
\end{examplex}
\begin{examplex}[$n=2,\, m=1, \,k_1=3$] \label{ex: base locus code}
    We return to the SAGBI basis from \Cref{ex: base locus}, which has a non-empty base locus and smaller degree of the induced monomial map. The function \texttt{sagbi\char`_homotopy} still computes part of the solutions but returns an error message saying that there are missing solutions. In this case we obtain only two of the four expected  solutions.
\begin{minted}{julia}
@var x,y
sagbi = [[x*(x^2 + y^2 - 2*x), x*(5-4*y), y*(x^2 + y^2 - 2*x), y*(5-4*y)]]
w = get_weight(sagbi) # [-2,-1]
degree_map(sagbi) # 2
degree_monomial_map(sagbi,w) #1

@var p[1:4]
lin_sys = [ rand(ComplexF64,2,4) *p ] 
@time res, sols = sagbi_homotopy(lin_sys, sagbi; weight = w) 
##### Output:
Error: degree of monomial parameterisation drops from 2 to 1.
SAGBI homotopy will not find all the solutions.
SAGBI homotopy successfully completed with 2 solutions.
\end{minted}
    %Now we check whether there are solutions coming from the non-empty base locus. In this case, as we have only one set of parameterizing polynomials, so we can already predict that each of the two points in the base locus will give an additional solution. 
    To compute the remaining solutions from the base locus we set the optional input \texttt{getBaseLocus} to true in the main function. Note that $(0,0)$ will still be missing, as we compute the base locus inside the torus with the default polyhedral homotopy method. 
\begin{minted}{julia}
res, sols = sagbi_homotopy(lin_sys, sagbi; weight = w, getBaseLocus = true)
##### Output:
Error: degree of monomial parameterisation drops from 2 to 1. 
SAGBI homotopy will not find all the solutions.
SAGBI homotopy successfully completed with 4 solutions.
\end{minted}
\end{examplex}
\begin{examplex}
  We conclude this section with an example of a \textit{special} polynomial square system. The set $\mathcal{B} = \mathcal{B}_1$ is a birational parameterization of the Grassmannian $\Gr(3,6)$, as described in detail in \Cref{sec:5}. Our system consists of 9 specially chosen linear equations in $\mathcal{B}_1$ elements. More precisely, the coefficients of these equations are stored in a sparse $9 \times 20$ block-diagonal matrix:
\[A= \tiny
\left[
\begin{array}{cccccccccccccccccccc}
4 & 1 & 3 & 0 & 8 & -5 & 4 & 0 & 0 & 0 & 0 & 0 & 0 & 0 & 0 & 0 & 0 & 0 & 0 & 0 \\
-3 & 5 & 5 & 0 & -8 & 9 & -2 & 0 & 0 & 0 & 0 & 0 & 0 & 0 & 0 & 0 & 0 & 0 & 0 & 0 \\
8 & 6 & -8 & -10 & -4 & 10 & 9 & 0 & 0 & 0 & 0 & 0 & 0 & 0 & 0 & 0 & 0 & 0 & 0 & 0 \\
0 & 0 & 0 & 0 & 0 & 0 & 0 & 0 & -10 & -2 & 4 & 7 & 7 & 0 & 0 & 0 & 0 & 0 & 0 & 0 \\
0 & 0 & 0 & 0 & 0 & 0 & 0 & 8 & -7 & 6 & 10 & 2 & 10 & 0 & 0 & 0 & 0 & 0 & 0 & 0 \\
0 & 0 & 0 & 0 & 0 & 0 & 0 & -1 & 3 & -9 & 8 & 2 & 2 & 0 & 0 & 0 & 0 & 0 & 0 & 0 \\
0 & 0 & 0 & 0 & 0 & 0 & 0 & 0 & 0 & 0 & 0 & 0 & 0 & 5 & 8 & 5 & -9 & -10 & 8 & 6 \\
0 & 0 & 0 & 0 & 0 & 0 & 0 & 0 & 0 & 0 & 0 & 0 & 0 & -7 & 5 & 0 & 4 & 9 & -2 & -4 \\
0 & 0 & 0 & 0 & 0 & 0 & 0 & 0 & 0 & 0 & 0 & 0 & 0 & 4 & -2 & -2 & 1 & -2 & -9 & 1 \\
\end{array}
\right] 
\]
This system has 12 non-singular solutions. We compute these solutions by tracking $42 = \deg \Gr(3,6)$ paths with an additional feature of varying linear equations $\texttt{varyLinearPart = true}$ with the (randomized) straight line homotopy
\[\zeta\cdot tA\mathbf{z} + (1-t)\widehat{A}\mathbf{z}, \quad \zeta \in \C^{\times},\]
where $\widehat{A}$ is a $9\times 20$ matrix with random complex entries. All the computations for this example are available on the 
MathRepo~\cite{mathrepo} page \eqref{eq: mathrepo}.

In particular, with $\texttt{varyLinearPart = true}$, user can apply SAGBI homotopy to systems parameterized by a set $\mathcal{B}$, such that $\mathbf{s}\mathcal{B}$ doesn't form a SAGBI basis for any term order, but it can be completed to a finite one in the same graded piece of the algebra generated by $\mathbf{s}\mathcal{B}$.
\end{examplex}

\section{Applications}\label{sec:5}

In this section we illustrate big families of horizontally parameterized  systems naturally arising from different areas of mathematics and science whose structure naturally arises as described in \eqref{linearcombination}.
\subsection{Linear equations on the Grassmannian}\label{sec: Grass}

The complex \emph{Grassmannian} $\Gr_\C(k,m)$ parameterizes $k$-dimensional subspaces of $\C^m$.
%We represent a subspace $V\subseteq \C^m$ by a $k\times m$ matrix $H$ of rank $k$ whose rowspan is equal to $V$. Clearly, the choice of such matrix is not unique, but it is up to the (left) action of $\GL(k)$, that does not change the rowspan.
%To define coordinates on $\Gr_\C(k,m)$ we consider it as a projective variety in a projective space via the \emph{Plücker embedding}. 
%For a subset $I\subseteq [m]$ such that $|I|=k$, we define the \emph{Plücker coordinate} $p_I$ as the $k$-minor of $H$ obtained by taking the columns indexed by $I$. The Plücker embedding is defined as:
%\begin{equation} \label{eq: plucker embedding}
%    p: \Gr_\C(k,m)\hookrightarrow \PP^{\binom{m}{k}-1}, \,H\mapsto(p_I)_{I\in\binom{m}{[k]}}.
%\end{equation}
It is a rational variety embedded into $\PP^{\binom{m}{k}-1}$ via the \emph{Plücker embedding} $ \Gr_\C(k,m)\hookrightarrow \PP^{\binom{m}{k}-1}$, defined using the \emph{Plücker coordinates} $p_I$, $I\in\binom{m}{[k]}$, that are the maximal minors of a general $k\times m$ matrix, see for example \cite[Ch.~5]{Michalek2021Invit-53971}. The coordinate ring of $\Gr_{\C}(k,m)$ is generated by these minors.

We can consider a point in the affine chart of the Grassmannian given by $\Gr_\C(k,m)\cap\{p_{[k]}=1\}$ as a matrix $H$ with a $k\times k$ identity block on the left and $n$ variables on the remaining part, where $n=k(m-k)$ is the dimension of $\Gr_\C(k,m)$. We take the parameterizing set $\mathcal{B}$ to be the set of all maximal minors of $H$. The set $s\mathcal{B}$ forms a SAGBI basis with respect to the \emph{PBW term order}, represented by the following weight vector. See, for instance, \cite[Theorem 3.7]{FeiPopMakh}.
% \begin{equation} \label{eq: matrix affine chart}
% H = \begin{pmatrix}
%    1& & &\vrule&   t_1  & \dots & t_{m-k}  \\ 
%     &\ddots& & \vrule & \vdots  & & \vdots \\
%    &  & 1&\vrule& t_{n-m+k+1}& \dots & t_n    
% \end{pmatrix}. 
% \end{equation} 
%If a term order $\prec$ selects as initial terms of the Plücker coordinates $p_I$ the product on the main diagonal of the corresponding submatrix of $H$, we say that $\prec$ is a \emph{diagonal term order}.
%A weight vector representing a diagonal term order for the affine chart$\Gr_\C(k,m)\cap\{p_{[k]}=1\}$ is given by unrolling the entries of the following matrix \cite{clarke2023matchingfieldsmacaulay2} :
\begin{equation} \label{eq: w grassmannian} \small 
\omega = \begin{pmatrix}
    0 & \cdots & 0 \\
    m-k & \cdots & 1 \\
    2(m-k) & \cdots & 2 \\
    \vdots & & \vdots \\
    (m-k-2)(m-k) & \cdots & (m-k)-2
\end{pmatrix}
\end{equation} 
%\begin{proposition}[{\cite[Prop.~11.8]{sturmfelsgröbner}}]
    % The Plücker coordinates are a finite SAGBI basis for the coordinate ring $R$ of the Grassmannian $\Gr_\C(k,m)$ with respect to any diagonal term order $\prec_d$.
%\end{proposition}
This gives us a toric degeneration of the Grassmannian to the variety whose coordinate ring is $\mathrm{in}_{\omega}(\C[\Gr_\C(k,m)])$. Moreover, the map given by $\mathcal{B}$ and the induced monomial map $H\mapsto (\mathrm{in}_\omega(p_I))_I$ parameterizing the toric variety have both degree $1$, thus we can apply \Cref{alg: two-step sagbi algorithm} to solve $n=k(m-k)$ linear equations on the Grassmannian via SAGBI homotopy. This is a computational problem arising in many different applications. In \cite{BETTI2025102340} the authors present different Schubert problems, that are given by linear conditions on a Grassmannian and describe incidence conditions on hyperplanes. Efficiently slicing the Grassmannian $\Gr(2,6)$ with $8$-hyperplanes is also one of the main computational tasks to solve the Mukai lifting problem for self-dual points in $\mathbb{P}^6$. In this case, the solutions represent a configuration of $14$ self-dual points in $\mathbb{P}^6$ embedded in $\mathbb{P}^{14}$, see \cite{betti2024mukailiftingselfdualpoints}.
We show this in the following snippet of code.
\begin{minted}{julia}
(x, sagbi, w) = get_sagbi_grassmannian(2,6)
A = randn(ComplexF64,8,15) 
@var z[1:z] #homotopy continuation variables
F = A*z #linear equations on Gr(2,6)
@time res, sols = sagbi_homotopy(F, sagbi, weight = w);
Tracking 14 paths... 100%|| Time: 0:00:02
  # paths tracked:                  14
  # non-singular solutions (real):  14 (0)
  # singular endpoints (real):      0 (0)
  # total solutions (real):         14 (0)
SAGBI homotopy successfully completed with 14 solutions. 
\end{minted}

In \Cref{tab: computational time grassmannian} we report computational times for solving linear equations on different Grassmannians in three different ways . We use our \texttt{sagbi\char`_homotopy} function twice, first giving as input the weight vector $\omega$ as in \eqref{eq: w grassmannian} and running the SAGBI detection algorithm the second time. The third column reports the computational time for the standard \texttt{HomotopyContinuation.solve} function, that is significantly larger when the number of solutions is high. We observe that running the SAGBI detection algorithm increases the computational time as expected, in particular for $\Gr(3,7)$ the \texttt{get\char`_weight} function is very slow. If the computation did not complete within a few hours, we omit the corresponding result in the table. 
\begin{table}[h]
\centering
\small
\begin{tabular}{c|ccccc}
$\Gr$ & dim & degree & $t_\omega$ & $t_{\text{SAGBI detection}}$ & $t_{\text{solve}}$\\ \hline
$(2,4)$ & 4  & 2     &  $1.37s$    & $1.53s$    & $1.82s$ \\
$(2,5)$ & 6  & 5     &  $5.01s$    & $6.36s$    & $3.36s$ \\
$(2,6)$ & 8  & 14     &  $5.89s$    & $7.73s$    & $4.06s$ \\ 
$(3,6)$ & 9 & 42     &  $8.42s$    & $9.81s$    & $9.88s$  \\ 
$(2,7)$ & 10 & 42     &  $8.27s$    & $10.37s$    & $7.06s$  \\ 
$(3,7)$ & 12 & 462    &  $22.00s$    & $1177s$    & $746s$ \\ 
$(2,8)$ & 12 & 132    &  $12.96s$    & $18.34s$    & $19.60s$ \\
$(3,8)$ & 15 & 6006   &  $315.7s$    & $\times$    & $\times$ \\
$(4,8)$ & 16 & 24024  &  $1775s$    & $\times$    & $\times$ \\
$(2,9)$ & 14 & 429    &  $24.39s$    & $\times$    & $\times$ \\
%$(3,9)$ & 18 & 87516    &  $s$    & $s$    & $s$ \\
%$(4,9)$ & 20 & 1662804    &  $s$    & $s$    & $s$ \\
\end{tabular}
    \caption{Computational time for linear equations on Grassmannians.}
    \label{tab: computational time grassmannian}
\end{table}
\subsection{Stationary nonlinear dynamics}
Nonlinear differential equations play a central role in modeling complex physical phenomena such as turbulence, chaos, and phase transitions, appearing in all fields from fluid dynamics to quantum mechanics~\cite{Strogatz1994, Falkovich2011,griffin1996bose,Soriente21}. We focus on differential equations of the form
\begin{equation*}
\ddot{X_i}+\omega_{0}^{2}\left(1-\lambda\cos\left(2t\omega\right)\right)X_i+ \gamma\dot{X_i}+ F_{\text{nl}}(X_i) + \sum_{l\neq i}J_{i,l}X_l=0,
\end{equation*}
which describe parametrically driven \emph{coupled nonlinear resonators} with damping and nonlinear restoring force $F_{\text{nl}}(X)=\alpha_1 X^3 + \ldots + \alpha_{n-1} X^{2n-1}$ assumed to be a polynomial of odd degree.

The \emph{Harmonic balance method}~\cite{Krack_2019,hb_julia_code} is a technique for approximating steady-state solutions of nonlinear differential equations by representing the solution as a truncated Fourier series. It reduces the problem to solving a system of nonlinear algebraic equations whose real solutions correspond to physically meaningful steady states. For instance, for~$N$ coupled resonators with nonlinearity of degree $(2n-1)$ and~$M=1$~leading frequency this leads to~$2N$ algebraic equations in~$2N$ variables~$\bfu = (u_1,\ldots,u_N)$ and $\bfv= (v_1,\ldots,v_N)$, which come in~$N$ pairs: 
\begin{equation}\label{equations_N}
\begin{aligned}
& \begin{bmatrix}
p_1(\bfu,\bfv), &q_1(\bfu,\bfv),&\hdots ,&p_N(\bfu,\bfv), &q_N(\bfu,\bfv)    
\end{bmatrix}=0, \text{ where } \\
p_{i}(\bfu,\bfv) &= a_{0,i} + a_{1,i} u_i + a_{2,i} v_i + \ldots + a_{n+1,i}u_i(u_i^2 + v_i^2)^{n-1} + \frac{1}{2}\sum_{j\neq i}J_{j,i}u_j,\\
q_{i}(\bfu,\bfv)&= b_{0,i} + b_{1,i} u_i + b_{2,i} v_i + \ldots + b_{n+1,i}v_i(u_i^2 + v_i^2)^{n-1} +  \frac{1}{2}\sum_{j\neq i}J_{j,i}v_j\\
\end{aligned}
\end{equation}
with $a_{2,i} = -b_{1,i}, \, a_{k,i} = b_{k,i}$ for~$i=1,\dots,N$ and~$k = 3,\dots,n+1$. For more details  see~\cite{oscillators}.
\begin{examplex}
    Consider a nonlinear resonator with $M=5$ leading frequencies. The solution set of the following square system $\mathcal{F} = \{p_1 = q_1 = \cdots = p_5 = q_5 = 0\}$ of 10 polynomial equations as in~\eqref{equations_N}
is a set of approximate stationary states for the resonator. We solve the system with random \texttt{Float64} coefficients $c_{i,j}$ and $b_{i,j}$ applying our \texttt{sagbi\char`_homotopy} function. The output of \texttt{get\char`_weight} is $\omega=(2,2,2,2,2,1,1,1,1,1)$. The system has $5^5 = 3125$ non-singular complex solutions, one of which is real. The SAGBI homotopy computes them by tracking exactly $3125$ paths in 6 seconds. 
In contrast, the default polyhedral homotopy  tracks 59049 paths and takes $07:15$ minutes. 
\end{examplex}
\begin{remark}
    For $N$ nonlinear resonators of degree $2n-1$ with $M$ leading frequencies, the system has $2NM$ equations in $2NM$ variables of degree $2n-1$. Therefore, the complexity of solving these problems grows extremely fast, making polyhedral homotopy extremely inefficient.  In \cite{oscillators} the SAGBI property for the systems from this family  was showed for all $n, N, M$. 
\end{remark}
In \Cref{tab: computational time resonators} we compare the computational time of our \texttt{sagbi\char`_homotopy} function with the standard \texttt{solve}, testing also the SAGBI detection algorithm.
\begin{table}[h]
\centering
\small
\begin{tabular}{cc|cccc}
$N$ & $M$ & degree & $t_\omega$ & $t_{\text{SAGBI detection}}$ & $t_{\text{solve}}$\\ \hline
$1$ & $1$  & 5       &  $4.78s$     & $4.86s$     & $2.21s$ \\
$1$ & $2$  & 25      &  $3.94$      & $4.21s$     & $2.35s$ \\
$1$ & $3$  & 125     &  $4.65s$     & $278s$      & $3.16s$ \\ 
$1$ & $4$  & 615     &  $6.04s$     & $\times$    & $15s$  \\ 
$1$ & $5$  & 3125    &  $14.17s$    & $\times$    & $196s$  \\ 
$2$ & $1$  & 25      &  $3.86s$     & $2.49$      & $1.902s$ \\ 
$2$ & $2$  & 616     &  $6.34s$     & $\times$    & $11.74s$ \\
$2$ & $3$  & 14887   &  $2795s$     & $\times$    & $2955s$\\
%$(4,8)$ & 16 & 24024  &  $1775s$    & $s$    & $s$ \\
%$(2,9)$ & 14 & 429    &  $24.39s$    & $s$    & $s$ \\
%$(3,9)$ & 18 & 87516    &  $s$    & $s$    & $s$ \\
%$(4,9)$ & 20 & 1662804    &  $s$    & $s$    & $s$ \\
\end{tabular} 
    \caption{Computational time $N$ nonlinear resonators with $M$ leading frequencies.}
    \label{tab: computational time resonators}
\end{table}

\subsection{Coupled cluster equations in quantum chemistry}
Electronic structure theory is a fundamental quantum mechanical framework for describing electron interactions in molecules and crystals. 
The central problem is the solution of the electronic Schrödinger equation
\begin{equation}\label{eq: Schrodinger}
    \mathcal{H} \,\Psi(\mathbf{x}_1, \mathbf{x}_2, \ldots, \mathbf{x}_d) = \lambda\, \Psi(\mathbf{x}_1, \mathbf{x}_2, \ldots, \mathbf{x}_d), 
\end{equation} which leads to a high-dimensional eigenvalue problem with $3d$ degrees of freedom for $d$ electrons in $n$ spin orbitals~\cite{Schneider2009}. The unknown in~\eqref{eq: Schrodinger} is the \emph{wave function} $\Psi(\mathbf{x}_1, \mathbf{x}_2, \ldots, \mathbf{x}_d)$.
The arguments in this function are pairs $\mathbf{x}_i = (\mathbf{r}_i, s_i)$, where $\mathbf{r}_i = (r_i^{(1)}, r_i^{(2)}, r_i^{(3)})$ are points in $\mathbb{R}^3$ representing the positions of $d$ electrons, and $s_i \in \{\pm 1/2\}$ describes the electronic spin. As the size of the system increases, the computational complexity grows exponentially, requiring sophisticated numerical methods. Among the most accurate and widely used of these is the coupled cluster theory~\cite{Helgaker2000}.
From the  equation \eqref{eq: Schrodinger}, we  derive a nonlinear eigenvalue problem, as explained in detail in \cite{FSS}:
\begin{equation}\label{eq: eigenvalueproblem}
   H \psi = \lambda \psi.
\end{equation}
Vectors $\psi$ are called \emph{quantum states}.
Their entries are, in fact, Plücker coordinates.
After truncating~\eqref{eq: eigenvalueproblem} to a square system, this becomes a horizontally parameterized system on a certain variety. We refer to the polynomial equations in \eqref{eq: eigenvalueproblem} as \emph{coupled cluster equations}. For general real symmetric matrix $H$, called a \emph{Hamiltonian}, the number of complex solutions is constant and called  \emph{CC (coupled cluster) degree}. For a \emph{coupled cluster single} model (\cite{FO, faulstich2023homotopy}), it was showed in \cite{meBerndSvala} that the system \eqref{eq: eigenvalueproblem} is a linear section of the affine cone over the graph of a birational parameterization of the Grassmannian (the same as in \Cref{sec: Grass}). In \Cref{tab: computational time cc equations} we collect the computational time  for solving CC equations for different $d$ and $n$.
\begin{table}[h]
\centering
\small
\begin{tabular}{c|ccccc}
$(d,n)$ & dim & CCdegree & $t_\omega$ & $t_{\text{SAGBI detection}}$ & $t_{\text{solve}}$\\ \hline
$(2,4)$ & 4  & 9      &  $1.47s$    & $1.59s$    & $1.95s$ \\
$(2,5)$ & 6  & 27      &  $5.08$    & $5.28s$    & $2.90s$ \\
$(2,6)$ & 8  & 83     &  $6.23s$    & $7.40s$    & $4.38s$ \\ 
$(3,6)$ & 9 & 250      &  $9.06s$    & $13.76s$    & $21.14s$  \\ 
$(2,7)$ & 10 & 263    &  $9.38s$    & $14.13s$    & $10.48s$  \\ 
$(3,7)$ & 12 & 2883    &  $54.10s$    & $2111s$    & $11203s$ \\ 
$(2,8)$ & 12 & 857    &  $19.60s$    & $55.50s$    & $61.17s$ \\
$(3,8)$ & 15 & 38607   &  $1818s$    & $\times$    & $\times$ \\
%$(4,8)$ & 16 & 24024  &  $1775s$    & $s$    & $s$ \\
%$(2,9)$ & 14 & 429    &  $24.39s$    & $s$    & $s$ \\
%$(3,9)$ & 18 & 87516    &  $s$    & $s$    & $s$ \\
%$(4,9)$ & 20 & 1662804    &  $s$    & $s$    & $s$ \\
\end{tabular}
    \caption{Computational time for coupled cluster equations on Grassmannians.}
    \label{tab: computational time cc equations}
\end{table}

\appendix
\section{Appendix} \label{appendix}
\begin{proposition} \label{prop: deformalgebras}
    Let $\{b_0,\dots,b_k\} \subset \C[x_1,\dots,x_n]$ form a SAGBI basis with respect to~$\omega$ for the subalgebra $S=\C[b_0,\dots,b_k]$ and take $\nu = \mathcal{A}^{T}\omega$, where $\mathcal A$ is a $(k+1)\times n$ matrix with columns given by the exponents of $\omega$-leading monomials of $\{b_0,\dots,b_k\}$.  Then for every $t \in \C^1$ the subalgebra $S_t^{\omega}$ is isomorphic to  $\C[\mathbf{z},t]/I_t^{\nu}$, where $I \subseteq \C[z_0,\dots,z_k]$ is the ideal of algebraic relations of $\{b_0,\dots,b_k\}$.
\end{proposition}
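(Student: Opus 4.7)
The plan is to build a natural $\C[t]$-algebra surjection $\overline\Phi:\C[\mathbf z,t]/I_t^\nu \twoheadrightarrow S_t^\omega$, show it becomes an isomorphism after inverting $t$ via a torus-action change of variables, and then kill its kernel using flatness over $\C[t]$. Concretely, I would set $\Phi(z_i) := (b_i)_t^\omega$ and $\Phi(t) := t$; the image is $S_t^\omega$ by definition. To check that $\Phi(f_t^\nu) = 0$ for every $f \in I$, I would use the defining identity $\nu_i = \omega(b_i)$ together with the multiplicativity of the torus action to obtain
\[\prod_i \bigl((b_i)_t^\omega\bigr)^{\alpha_i}=t^{\nu\cdot\alpha}\bigl(t\cdot \mathbf b^{\alpha}\bigr).\]
This yields
\[\Phi(f_t^\nu)=\sum_\alpha c_\alpha\, t^{\nu(f)-\nu\cdot\alpha}\cdot t^{\nu\cdot\alpha}\bigl(t\cdot \mathbf b^{\alpha}\bigr)=t^{\nu(f)}\bigl(t\cdot f(\mathbf b)\bigr)=0,\]
because $f(\mathbf b)=0$ in $S$, so $\Phi$ descends to the desired surjection $\overline\Phi$.

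For injectivity, I would work over $\C[t^{\pm 1}]$ using the two $\C[t^{\pm 1}]$-algebra automorphisms $\psi_z:z_i\mapsto t^{-\nu_i}z_i$ and $\psi_x:x_i\mapsto t^{-\omega_i}x_i$. Direct calculations give $\psi_z^{-1}(f_t^\nu)=t^{\nu(f)}f(\mathbf z)$ and $(b_i)_t^\omega=t^{\nu_i}\psi_x(b_i)$, so after inverting $t$ one has $I_t^\nu\,\C[\mathbf z,t^{\pm 1}]=I\cdot\C[\mathbf z,t^{\pm 1}]$ and $S_t^\omega[t^{\pm 1}]=\psi_x(S)[t^{\pm 1}]\cong S[t^{\pm 1}]$. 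These identifications are compatible with $\overline\Phi$, so the localization $\overline\Phi\otimes_{\C[t]}\C[t^{\pm 1}]$ is an isomorphism. Consequently, $\ker\overline\Phi$ is annihilated by some power of $t$.

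To finish, I would invoke \Cref{prop: flatfactorrings}: $\C[\mathbf z,t]/I_t^\nu$ is flat, hence torsion-free, over the PID $\C[t]$. Any $t$-torsion submodule of a torsion-free module is zero, so $\ker\overline\Phi=0$ and $\overline\Phi$ is an isomorphism. The main obstacle is getting the product identity $\prod_i((b_i)_t^\omega)^{\alpha_i}=t^{\nu\cdot\alpha}(t\cdot\mathbf b^{\alpha})$ cleanly out of $\nu=\mathcal A^T\omega$ and the fact that $(t\cdot\,)$ is a $\C$-algebra homomorphism on $\C[\mathbf x,t^{\pm 1}]$; once that compatibility is in hand everything else is formal, and it is worth noting that the SAGBI hypothesis itself plays no explicit role in this argument.
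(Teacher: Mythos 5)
Your proof is correct, and it takes a genuinely different route from the paper's. Since it also quietly fixes a subtle issue in the paper's lemmas, the comparison is worth spelling out.

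\medskip

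\textbf{Vanishing on $I_t^\nu$.} The paper proves this via Lemmas~\ref{lem: deformalgebras1} and~\ref{lem: deformalgebras2}: after decomposing into $\omega$- and $\nu$-graded pieces, it derives the identity
$g_t^\nu\bigl((b_0)_t^\omega,\dots,(b_k)_t^\omega\bigr) = [g(b_0,\dots,b_k)]_t^\omega$ for all $g$. You instead exploit that $t\cdot(-)$ is a $\C$-algebra homomorphism on $\C[\mathbf{x},t^{\pm1}]$ and that $\nu_i = \omega(b_i)$ by construction, giving the cleaner identity
\[
\Phi(g_t^\nu) = t^{\nu(g)}\bigl(t\cdot g(\mathbf{b})\bigr).
\]
These differ subtly but meaningfully: the paper's version requires $\nu(g) = \omega(g(\mathbf{b}))$, which fails when the top $\omega$-weight terms of $g(\mathbf{b})$ cancel. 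For instance, with $\mathcal{B} = \{x,\, y,\, x+y\}$, $\omega = (1,0)$, so $\nu=(1,0,1)$, and $p = z_0 - z_2$, one computes $p\bigl((b_i)_t^\omega\bigr) = -ty$ while $[p(\mathbf{b})]_t^\omega = -y$, so the equality in Lemma~\ref{lem: deformalgebras2} does not hold verbatim. Your formulation avoids the issue entirely: you only need $f(\mathbf{b})=0$ to conclude $\Phi(f_t^\nu)=0$, which is exactly what you use. (The paper's final conclusion is unaffected, since for $g\in I$ both claimed expressions vanish; but your derivation is the one that is literally correct at every step.)

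\medskip

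\textbf{Reverse inclusion.} The paper invokes flatness and says that ``dimension and primality are preserved,'' which is terse. Your argument is tighter: localize at $t$, conjugate by the monomial automorphisms $\psi_z,\psi_x$ to see that $\overline\Phi\otimes_{\C[t]}\C[t^{\pm1}]$ is an isomorphism, conclude that $\ker\overline\Phi$ is $t$-power torsion, and then use that $\C[\mathbf{z},t]/I_t^\nu$ is flat over the PID $\C[t]$ (\Cref{prop: flatfactorrings}), hence torsion-free, to kill the kernel. This is a standard but cleanly executed move; I find it more convincing than the paper's appeal to dimension and primality.

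\medskip

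\textbf{On the SAGBI hypothesis.} Your closing observation is correct and worth keeping: for the isomorphism of $\C[t]$-algebras $S_t^\omega \cong \C[\mathbf{z},t]/I_t^\nu$, the SAGBI property plays no role --- only the defining relation $\nu_i=\omega(b_i)$ and the flatness of Gr\"obner degenerations are used. The SAGBI hypothesis is needed in the subsequent corollary, where one wants to identify the special fibre with $\mathrm{in}_\omega(S)$: without it, $\C[\mathrm{in}_\omega(b_0),\dots,\mathrm{in}_\omega(b_k)]$ can be a proper subalgebra of $\mathrm{in}_\omega(S)$. It would be a useful remark to add that this proposition is really about arbitrary Gr\"obner degenerations of parametrized subalgebras.
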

To prove \Cref{prop: deformalgebras} we first consider two technical lemmas.
\begin{lemma}\label{lem: deformalgebras1}
    Products of polynomials commute with taking homogenizations.
That is,
\[
(p h)^{\omega}_t = p_t^{\omega} h_t^{\omega}
\]
\end{lemma}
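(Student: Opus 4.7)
The plan is to split the identity into two elementary facts about the weight valuation: (i) the torus action $t \cdot (-)$ is a $\C[t^{\pm 1}]$-algebra homomorphism on $\C[\mathbf{x}, t^{\pm 1}]$, and (ii) the weight function $\omega(\cdot)$ is additive on non-zero products in $\C[\mathbf{x}]$. Granted both facts, the identity follows from the rearrangement $(ph)_t^{\omega} = t^{\omega(ph)}\bigl(t \cdot (ph)\bigr) = t^{\omega(p) + \omega(h)} (t \cdot p)(t \cdot h) = \bigl(t^{\omega(p)}(t \cdot p)\bigr)\bigl(t^{\omega(h)}(t \cdot h)\bigr) = p_t^{\omega}\, h_t^{\omega}$.

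For (i), I would expand $p = \sum_\alpha a_\alpha \mathbf{x}^\alpha$ and $h = \sum_\beta b_\beta \mathbf{x}^\beta$ in the monomial basis and check the identity $t \cdot (ph) = (t \cdot p)(t \cdot h)$ monomial by monomial, using only $t^{-\omega \cdot (\alpha+\beta)} = t^{-\omega \cdot \alpha}\, t^{-\omega \cdot \beta}$. This is completely mechanical.

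The substantive step is (ii). I would denote by $p^{\top}$ the $\omega$-leading form of $p$, namely the sum of monomials of $p$ whose exponent achieves the maximal pairing $\omega(p)$, and define $h^{\top}$ analogously. Every monomial of $ph$ has $\omega$-weight at most $\omega(p) + \omega(h)$, and the monomials of weight exactly $\omega(p) + \omega(h)$ in $ph$ are precisely the monomials of $p^{\top} h^{\top}$. The key observation is that $p^{\top} h^{\top} \neq 0$, which holds because $\C[\mathbf{x}]$ is an integral domain and both $p^{\top}$ and $h^{\top}$ are non-zero. Consequently the maximum weight is attained in $ph$ and equals $\omega(p) + \omega(h)$, giving (ii).

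The only point that is not purely formal is the non-vanishing $p^{\top} h^{\top} \neq 0$, and this is immediate from the integral-domain structure of $\C[\mathbf{x}]$. Everything else is bookkeeping with exponents and powers of $t$.
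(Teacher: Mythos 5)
Your proof is correct, and it is essentially the same argument as the paper's, repackaged. The paper works directly with the $\omega$-graded decompositions $p = \sum_i p_i$, $h = \sum_j h_j$, writes out $p_t^\omega = \sum_i p_i t^{m-i}$, $h_t^\omega = \sum_j h_j t^{\ell-j}$, multiplies, and compares with $(ph)_t^\omega = \sum_{i,j} p_i h_j\, t^{m+\ell-(i+j)}$. You instead factor the identity through the two structural facts about the definition $f_t^\omega = t^{\omega(f)}(t\cdot f)$, namely that $t\cdot(-)$ is multiplicative and that $\omega$ is additive on nonzero products. These are the same two ingredients the paper uses implicitly — (i) corresponds to the fact that the grading is multiplicative on monomials, and (ii) to the paper's step of homogenizing $ph$ with the exponent $m+\ell$. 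Your version has a modest advantage: the paper writes $(ph)_t^\omega = \sum p_i h_j\, t^{m+\ell-(i+j)}$ without justifying that $\omega(ph) = m+\ell$, i.e.\ that the top graded piece $p_m h_\ell$ does not vanish. You isolate this as the one non-formal step and observe it follows because $\C[\mathbf{x}]$ is an integral domain, which is a useful clarification worth keeping.
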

\begin{proof}
Take two  polynomials in $\C[\bfx] = \C[x_1, \ldots, x_n]$:
    \[p = \sum_{\langle \omega,\alpha \rangle \le m} c_{\alpha} \bfx^\alpha, \quad h = \sum_{\langle \omega,\alpha \rangle \le \ell} d_\alpha \bfx^\alpha.
\]
There is a natural grading on $\C[\bfx]$ where the weights of $(x_1, \ldots, x_n)$ are given by $\omega=(\omega_1, \ldots, \omega_n)$. Each polynomial $p \in \C[\bfx]$ decomposes into graded pieces as
\[
p = \sum p_i, 
\]
where $p_i \in \C[\bfx]_i$ is the set of polynomials of $\omega$-degree $i$,
that is, $\omega \cdot \alpha = i$ when $\bfx^\alpha$ appears in~$p_i$.
Since this is a grading, we have $p_i p_j \in \C[\bfx]_{i+j}$.
Note that our homogenization works as follows: for polynomials $p$ and $h$ we have decompositions $p = \sum_{i=0}^m p_i$, $h = \sum_{j=0}^\ell h_j$,
and
\[
p_t^{\omega} = \sum_{i=0}^m p_i t^{m-i}, \quad h_t^{\omega} = \sum_{j=0}^\ell h_j t^{\ell-j}.\]
Obviously, every homogeneous component of $p h$ arises as sum of products of $p_i$ and $h_j$:
\[
p h = \sum_{i,j} (p_i h_j) = \sum_{i=0}^m \sum_{j=0}^\ell p_i h_j,
\]
where each $p_i h_j$ is homogeneous of degree $i+j$.
To homogenize, we multiply each $p_i h_j$ by $t^{m+\ell-(i+j)}$:
\[
(p h)_t^\omega = \sum (p_i h_j) t^{m+\ell-(i+j)}.
\]
We obtain exactly the same sum by multiplying $p_t^\omega$ by $h_t^\omega$.
\end{proof}
\begin{remark}
    \Cref{lem: deformalgebras1} can be clearly generalized for products of an arbitrary number of polynomials.
\end{remark}
\begin{lemma}\label{lem: deformalgebras2}
    Let $p \in \C[\mathbf z] = \C[z_0, \ldots, z_k]$ be a $\nu$-homogeneous polynomial, $\nu =\mathcal{A}^{T}\omega$, that is,
\[
    p = \sum_{\langle \nu, \alpha \rangle = i} c_\alpha {\bf z}^\alpha.
\]
Then we have  $\left[ p(b_0, \ldots, b_k) \right]^\omega_t = p(b_{0,t}^\omega, \ldots, b_{k,t}^\omega)$.
\end{lemma}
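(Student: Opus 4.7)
The plan is to reduce the statement to the monomial case handled by \Cref{lem: deformalgebras1}. I would write $p = \sum_{\langle \nu, \alpha \rangle = i} c_\alpha \mathbf{z}^\alpha$ and expand
\[
p(b_{0,t}^\omega, \ldots, b_{k,t}^\omega) \;=\; \sum_\alpha c_\alpha \prod_{j=0}^k (b_{j,t}^\omega)^{\alpha_j}.
\]
By iterating \Cref{lem: deformalgebras1} (the product version noted in the remark after its proof), each product of homogenizations equals the homogenization of the product:
\[
\prod_{j=0}^k (b_{j,t}^\omega)^{\alpha_j} \;=\; \Bigl(\prod_{j=0}^k b_j^{\alpha_j}\Bigr)_t^\omega.
\]

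The central observation is a computation of the $\omega$-degree of each of these products. Since $\omega$ is compatible with a term order, each leading term $\mathrm{in}_\omega(b_j) = c_j\,\mathbf{x}^{\mathcal{A}_j}$ is a single monomial, so the leading term of $\prod_j b_j^{\alpha_j}$ is the (nonzero) monomial $\prod_j c_j^{\alpha_j}\,\mathbf{x}^{\sum_j \alpha_j \mathcal{A}_j}$ and no cancellation occurs. Consequently,
\[
\omega\!\left(\prod_j b_j^{\alpha_j}\right) \;=\; \sum_j \alpha_j \omega(b_j) \;=\; \sum_j \alpha_j \nu_j \;=\; \langle \nu, \alpha \rangle \;=\; i,
\]
where I use $\nu = \mathcal{A}^T \omega$.

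Now I would unpack the homogenization using the graded-piece formula $f_t^\omega = \sum_\ell f_{(\ell)} t^{\omega(f) - \ell}$ (with $f_{(\ell)}$ the $\omega$-graded component of degree $\ell$). Since every summand $\prod_j b_j^{\alpha_j}$ has the same $\omega$-degree $i$, the common factor $t^i$ pulls out of the sum, and the graded decomposition commutes with finite sums:
\[
p(b_{0,t}^\omega, \ldots, b_{k,t}^\omega) \;=\; \sum_\alpha c_\alpha \sum_\ell \Bigl(\prod_j b_j^{\alpha_j}\Bigr)_{(\ell)} t^{i-\ell} \;=\; \sum_\ell \bigl(p(b_0, \ldots, b_k)\bigr)_{(\ell)} t^{i - \ell},
\]
which is precisely $[p(b_0, \ldots, b_k)]_t^\omega$, the $\omega$-homogenization with top weight $i$.

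The only subtle point is that while each individual product $\prod_j b_j^{\alpha_j}$ has $\omega$-degree exactly $i$, the whole sum $p(b_0, \ldots, b_k)$ could in principle have smaller $\omega$-degree due to cancellation of leading terms among different $\alpha$'s. The right-hand side $\sum_\ell (p(b_0, \ldots, b_k))_{(\ell)}\,t^{i-\ell}$ is nevertheless an unambiguous polynomial in $\C[\mathbf{x}, t]$, and it is to be read as the homogenization with the target weight $i$ prescribed by the $\nu$-homogeneity of $p$. In the intended application of the lemma, namely to $\nu$-homogeneous relations $p$ arising in the multigraded SAGBI setup, the multi-grading prevents any such cancellation and the target weight $i$ coincides with $\omega(p(b_0, \ldots, b_k))$, so the identity holds as stated.
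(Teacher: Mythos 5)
Your proof takes the same route as the paper's: expand $p$ monomial by monomial, apply the product rule of \Cref{lem: deformalgebras1} to turn each $\prod_j (b_{j,t}^\omega)^{\alpha_j}$ into $\left(\prod_j b_j^{\alpha_j}\right)_t^\omega$, observe that every such product has the same $\omega$-top-weight $i = \langle\nu,\alpha\rangle$ because $p$ is $\nu$-homogeneous, and then collect graded pieces. The bookkeeping is identical.

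You are right to flag the cancellation issue, and in fact this is a gap in the paper's own argument: both proofs implicitly assume that the degree-$i$ graded piece of $p(b_0,\ldots,b_k)$ is nonzero, i.e.\ that $\omega\!\left(p(b_0,\ldots,b_k)\right)=i$. Your proposed patch, however, is not correct. Cancellation is not prevented by the multigrading; it occurs precisely when the monomials $\mathbf{x}^{\mathcal{A}\alpha}$ collide with coefficients summing to zero, i.e.\ when $p$ lies in the toric ideal $\mathrm{in}_\nu(I)$, and such $p$ are unavoidable. Concretely, take $b_0=x$, $b_1=y$, $b_2=xy+x$ with $\omega=(2,3)$ (a SAGBI basis for $\C[x,y]$). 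For $p=z_0z_1-z_2$, which is $\nu$-homogeneous of weight $i=5$, one has $p(b_0,b_1,b_2)=-x$, so $\left[p(b_0,b_1,b_2)\right]^\omega_t=-x$, while $p(b_{0,t}^\omega,b_{1,t}^\omega,b_{2,t}^\omega)=xy-(xy+t^3x)=-t^3x$. These differ by $t^3$. What your computation (and the paper's) actually establishes is the corrected identity
\[
p(b_{0,t}^\omega,\ldots,b_{k,t}^\omega) \;=\; t^{\,i-\omega(p(b_0,\ldots,b_k))}\,\left[ p(b_0, \ldots, b_k) \right]^\omega_t,
\]
with the convention that the right side is $0$ when $p(b_0,\ldots,b_k)=0$. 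This weaker statement is still enough for \Cref{prop: deformalgebras}, since the identity is only used there to conclude that $g^\nu_t$ vanishes on the deformed generators exactly when $g$ vanishes on the original ones, a biconditional that is insensitive to an overall power of $t$.
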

\begin{proof}
    We have $(b_{0,t}^{\omega})^{\alpha_0} \cdots (b_{k,t}^{\omega})^{\alpha_k}
    = (b_0^{\alpha_0} \cdots b_k^{\alpha_k})_t^\omega$
by \Cref{lem: deformalgebras1}.
We consider the $\omega$-decomposition 
\begin{equation}\label{eq: monomialinfDec}
     b_0^{\alpha^{(j)}_0} \cdots b_k^{\alpha^{(j)}_k} = \sum_{s \leq i} r_s^{(j)}.
\end{equation}
The $\omega$-order of $\operatorname{in}_\omega(b_0^{\alpha_0} \cdots b_k^{\alpha_k})$ is $i = \langle \nu, \alpha \rangle$, which  does not depend on $\alpha$ from $p$, since $p$ is $\nu$-homogeneous.
Indeed, writing $b_i = {\bf x}^{\beta_i} + \sum c_\alpha {\bf x}^\alpha$, where ${\rm in}_{\omega}(b_i) = \bfx^{\beta_i}$, we conclude that 
\[
    \operatorname{in}_\omega(b_0^{\alpha_0} \cdots b_k^{\alpha_k}) = \operatorname{in}_\omega(b_0)^{\alpha_0} \cdots \operatorname{in}_\omega(b_k)^{\alpha_k}
    = \bfx^{\alpha_0 \beta_0+ \cdots + \alpha_k \beta_k}.
\]
The $\omega$-order of this monomial is  $\langle \omega, \alpha_0 \beta_0 + \cdots + \alpha_k \beta_k \rangle = \langle \mathcal{A}^{T} \omega, \alpha \rangle = \langle \nu, \alpha \rangle = i$.
So for the corresponding homogenization from \eqref{eq: monomialinfDec} we obtain:
\[
    (b_0^{\alpha_0^{(j)}} \cdots b_k^{\alpha_k^{(j)}})^\omega_t = \sum_{s \leq i} r_s^{(j)} t^{i-s}.
\]
Now, we decompose $p(b_0, \ldots, b_k)$ into $\omega$-homogeneous parts.
\[
    p(b_0, \ldots, b_k) = \sum_j b_0^{\alpha_0^{(j)}} \cdots b_k^{\alpha_k^{(j)}} = \sum_{s \leq i} \left( \sum_j r_s^{(j)} \right).
\]
Its homogenization is then given by
\[
    \left[ p(b_0, \ldots, b_k) \right]^\omega_t = \sum_{s \leq i}  \sum_j r_s^{(j)}  t^{i-s}.
\]
On the other hand, we obtain
\[
    p(b_{0,t}^\omega, \ldots, b_{k,t}^\omega) = \sum_{j}  (b_{0,t}^{\omega})^{\alpha_0^{(j)}} \cdots (b_{k,t}^{\omega})^{\alpha_k^{(j)}} = \sum_j (b_0^{\alpha_0^{(j)}} \cdots b_k^{\alpha_k^{(j)}})^\omega_t = \sum_j \sum_{s \leq i} r_s^{(j)} t^{i-s}. \qedhere \]
\end{proof}
\begin{proof} (of \Cref{prop: deformalgebras})
    To prove \Cref{prop: deformalgebras} we first show that $$g_{t}^{\nu}(b_{0,t}^\omega, \ldots, b_{k,t}^\omega) = [g(b_0, \ldots, b_k)]_{t}^{\omega}$$ for any $g\in\C[\bf z]$.
Again, we decompose $g$ into $\nu$-graded pieces:
\begin{equation}\label{eq: gdecomp}
    g = \sum_{\langle \nu, \alpha \rangle \le q} k_\alpha {\bf z}^\alpha, \quad g = \sum_{i=o}^q g_i,
\end{equation}
where each monomial ${\bf z}^\alpha$ in $g_i$ is such that $\langle \nu, \alpha \rangle = i$.
The homogenization $g_{t}^{\nu}$ is then given by
\[
g^{\nu}_t = \sum_{i=0}^q g_i t^{q - i}.
\]
Meanwhile, we have:
\[
\left[ g(b_0, \ldots, b_k) \right]^\omega_t = \left[\sum_{i=0}^q g_i(b_0, \ldots, b_k)\right]^\omega_t = \sum_{i=0}^q \left[g_i(b_0, \ldots, b_k)\right]^\omega_t t^{q - i}.
\]
The last equality comes from the fact that, when homogenizing, we work with every monomial separately but then multiply the whole polynomial by $t^N$, where $N = \max \langle \nu, \alpha \rangle$ for this polynomial.
\Cref{lem: deformalgebras2} shows that such maximum for each $g_i$ is $\langle \nu, \alpha \rangle = \langle \mathcal{A}^T \omega, \alpha \rangle = i$.
That is,
\[
\left[ g_i(b_0, \ldots, b_k) \right]^\omega_t = t^{i}  \sum_\alpha (b_0^{\alpha_0} \cdots b_k^{\alpha_k})^\omega_t.
\]
However, 
\[
\left[ \sum_{i=0}^q g_i(b_0, \dots, b_k) \right]^\omega_t 
= t^q  \sum_{i=0}^q \sum_\alpha (b_0^{\alpha_0} \cdots b_k^{\alpha_k})^\omega_t,
\]
since $q = \langle \nu, \alpha \rangle = \langle \mathcal{A}^T \omega, \alpha \rangle$ is maximum on all terms of $g$.
On the other hand,
\[g^{{\nu}}_t(b_{0,t}^\omega, \ldots, b_{k,t}^\omega) = \sum_{i=0}^q g_i(b_{0,t}^\omega, \ldots, b_{k,t}^\omega) t^{q - i} = \sum_{i=0}^q \left[ g_i(b_0, \dots, b_k) \right]^\omega_t t^{q - i},\]
where the first equality is obtained from the decomposition \eqref{eq: gdecomp} and the second one is true by \Cref{lem: deformalgebras2}. Thus, we obtain
\[
\left[ g(b_0, \dots, b_k) \right]^\omega_t = g^{\nu}_t(b_{0,t}^\omega, \ldots, b_{k,t}^\omega).
\]
This implies that $g(b_0, \dots, b_k) = 0$ if and only if $g^{\nu}_t(b_{0,t}^\omega, \ldots, b_{k,t}^\omega) = 0$. Hence, $I_{t}^\nu$ is contained in the ideal of the algebraic relations between $\{b_{0,t}^\omega, \ldots, b_{k,t}^\omega\}$. The equality follows from the fact that $\C[\mathbf{z}, t]/I_{t}^\nu$ defines a flat family (cf.~\Cref{prop: flatfactorrings}), so the dimension and primality are preserved.
\end{proof}

% \vika{
% \begin{itemize}
%     \item oscillators \cite{oscillators}
%     \item cc equations 
%     \item Mukai
% \end{itemize}
% For some examples homotopies were not programmed before.
% }

%\nocite{*}
\section*{Acknowledgements}

We are grateful to Paul Breiding, Carles Checa and Simon Telen for helpful discussions and their comments on the manuscript. We thank Ben Hollering for suggestions regarding the implementation.

\printbibliography

\bigskip 

\noindent
\small
{\bf Authors' addresses:}

\smallskip

\noindent Barbara Betti, \\
Max Planck Institute for Mathematics in the Sciences, Leipzig, Germany 
\hfill {\tt betti@mis.mpg.de}

\medskip

\noindent Viktoriia Borovik,\\
Max Planck Institute for Mathematics in the Sciences, Leipzig, Germany 
\hfill {\tt borovik@mis.mpg.de}

\end{document}